\newcommand{\RR}{\mathbb R}
\newcommand{\ZZ}{\mathbb Z}
\newcommand{\QQ}{\mathbb Q}
\newcommand{\CC}{\mathbb C}
\newcommand{\NN}{\mathbb N}
\newcommand{\MM}{\mathbb M}
\newcommand{\cpt}{\mathbb K}
\def\bK{\mathbf K}
\def\Aut{\textup{Aut}}
\def\Coh{\mathtt{Coh}}
\def\Id{\textup{Id}}
\def\SL{\textup{SL}}
\def\Int{\textup{Int}}
\def\per{\textup{per}}
\def\Perf{\mathtt{Perf}}
\def\Cone{\textup{Cone}}
\def\C{\textup{C}}
\def\KKcat{\mathtt{KK_{C^*}}}
\def\Z{\textup{Z}}
\def\KQ{\textup{KQ}}
\def\id{\mathrm{id}}
\def\Im{\textup{Im}}
\def\IAlg{\mathtt{Alg}^{\mathtt{Idem}}_k}
\def\Hom{\textup{Hom}}
\def\kAlg{\mathtt{Alg}_k}
\def\ker{\textup{ker}}
\def\op{\textup{op}}
\def\rep{\textup{rep}}
\def\Hqe{\mathtt{Hqe}}
\def\Hmo{\mathtt{Hmo}}
\def\K{\textup{K}}
\def\SS{\mathbb{S}}
\def\alg{\textup{alg}}
\def\prot{\hat{\otimes}}
\def\bHC{\mathbf{HC}}
\def\cJ{\mathcal J}
\def\NCS{\mathtt{NCS_{dg}}}
\def\DGcorr{\mathtt{NCC_{dg}}}
\def\CSp{\mathtt{NCS_{C^*}}}
\def\HoSpt{\mathtt{HoSpt}}
\def\DGcat{\mathtt{DGcat}}
\def\CAlg{\mathtt{Alg_{C^*}}}
\def\KK{\textup{KK}}
\def\E{\textup{E}}
\def\H{\textup{H}}
\def\cC{\mathcal C}
\def\1{\bf{1}}
\def\hofib{\textup{hofib}}
\def\Csep{\mathtt{Sep_{C^*}}}
\def\kQ{\mathbf{KQ}}
\def\Pr{{\sf HPf_{dg}}}
\def\PrK{{\sf HPf_{dg}^{\textup{st}}}}
\def\cD{\mathcal D}
\def\cM{\mathcal M}
\def\TT{\mathbb{T}}
\def\A{\mathcal{A}}
\def\cA{\mathcal{A}}
\def\cB{\mathcal{B}}
\newcommand{\map}{\rightarrow}
\newcommand{\functor}{\longrightarrow}
\newcommand{\beq}{\begin{eqnarray}}
\newcommand{\beqn}{\begin{eqnarray*}}
\newcommand{\eeq}{\end{eqnarray}}
\newcommand{\eeqn}{\end{eqnarray*}}
\newtheorem{thm}{Theorem}[section]
\newtheorem{lem}[thm]{Lemma}
\newtheorem{prop}[thm]{Proposition}
\newtheorem{cor}[thm]{Corollary}
\newtheorem{ex}[thm]{Example}
\newtheorem{defn}[thm]{Definition}
\newtheorem{rem}[thm]{Remark}
\begin{document}

\title{Higher nonunital Quillen $\K'$-theory, $\KK$-dualities and applications to topological $\TT$-dualities}
\author{Snigdhayan Mahanta}

\email{snigdhayan.mahanta@mathematik.uni-regensburg.de}
\address{Fakult{\"a}t f{\"u}r Mathematik, Universit{\"a}t Regensburg, 93040 Regensburg, Germany.}

\subjclass[2010]{19Dxx, 46L85, 58B34, 57R56}
\keywords{$C^*$-algebra, (bivariant) $\K$-theory, differential graded category, $\mathbb{T}$-duality}
\maketitle

%%%%%%%%%%%%%%%%%%%%%%%%%%%%%%%%%%%%%%%%%%%%%%%%%%%%%%%%
\begin{abstract}
Quillen introduced a new $\K'_0$-theory of nonunital rings in \cite{QuiNonunitalK0} and showed that, under some assumptions (weaker than the existence of unity), this new theory agrees with the usual algebraic $\K^\alg_0$-theory. For a field $k$ of characteristic $0$, we introduce higher nonunital $\K$-theory of $k$-algebras, denoted $\KQ$, which extends Quillen's original definition of the $\K'_0$ functor. We show that the $\KQ$-theory is Morita invariant and satisfies excision connectively, in a suitable sense, on the category of idempotent $k$-algebras. Using these two properties we show that the $\KQ$-theory agrees with the topological $\K$-theory of stable $C^*$-algebras. The machinery enables us to produce a DG categorical formalism of topological homological $\TT$-duality using bivariant $\K$-theory classes. A connection with strong deformations of $C^*$-algebras and some other potential applications to topological field theories are discussed towards the end.
\end{abstract}

\begin{center}
{\bf Introduction}
\end{center}

Quillen defined nonunital $\K'_0$-groups for possibly nonunital rings in \cite{QuiNonunitalK0} and showed that a rather general type of Morita context induces an isomorphism between these groups. Using some machinery involving homotopy theory of differential graded (DG) categories developed by Keller--Tabuada, we introduce higher analogues of Quillen's $\K'_0$-groups for $k$-algebras, where $k$ is a field of characteristic $0$, and call it the $\KQ$-theory. It is desirable to extend the theory to algebras over an arbitrary commutative and unital ring. We establish some properties of the $\KQ$-theory which make them amenable to calculation, e.g., Morita invariance and excision (under some restrictions). In the following two paragraphs we explain what motivated us to study the $\KQ$-theory. The $\KQ$-groups are defined to be the $\K$-theory groups of certain DG categories. It will turn out that our primary interest lies in these DG categories, rather than their $\K$-theory groups.

In the operator algebraic setting some connections between Kasparov's $\KK$-theory and noncommutative $\TT$-dualities were investigated in \cite{BMRS,BMRS2}. Let $\KKcat$ be the category of separable $C^*$-algebras, whose morphisms are the bivariant $\KK_0$-groups. We put forward the idea that certain correspondence-like isomorphisms in $\KKcat$ can be regarded as topological homological $\TT$-duality isomorphisms, which give rise to isomorphisms between `DG categories of topological bundles' (in a suitably linearized homotopy category). For brevity, henceforth, we omit the adjective {\it homological} in $\TT$-duality. In general, it is better to have a `duality isomorphism' at the level of DG categories, as it retains more information about the underlying topology than $\K$-theory or cyclic homology. There are instances when the topological information of a principal torus bundle is completely encapsulated by a {\it continuous trace} stable $C^*$-algebra and then constructing its $\TT$-dual becomes an interesting exercise. We construct a functor $\Pr$ from the category of $C^*$-algebras to a category of noncommutative DG correspondences $\DGcorr$ (with DG categories as objects). The functor sends a unital $C^*$-algebra to its {\it homotopy perfect DG category}, which is closely related to the DG category of bounded complexes of finitely generated and projective right modules. This category could be viewed as a topological precursor of the derived DG category of branes on a manifold. One needs to incorporate a lot more geometric data into a $C^*$-algebra, in order to make the previous statement concrete. The appropriate objects for geometry could be something close to a Connes' spectral triple (see, e.g., the reconstruction Theorem \cite{STRec}) and presumably a DG category, whose homotopy category is {\it geometric} in the sense of Kontsevich \cite{KonNotes,KKP}. 

The following picture emerges from various examples \cite{BMRS,BMRS2}: If two stable $C^*$-algebras arise from `$\TT$-dual geometries', then there is a `$\TT$-duality $\KK_*$-class', which accounts for the $\TT$-duality phenomenon. A $\KK_0$-class (resp. $\KK_1$-class) corresponds to an even (resp. odd) number of $\mathbb{T}$-duality transformations. The category $\KKcat$ is triangulated (see \cite{MeyNes}) and one can suspend one of the algebras to convert the $\KK_1$-class into a $\KK_0$-class, i.e., a morphism in $\KKcat$. By suitably stabilizing the functor $\Pr$, one can obtain a new additive functor $\PrK$, which  factors through $\KKcat$. We propose to consider the effect of the `even $\TT$-duality class' in $\KK_0$ on DG categories in $\DGcorr$ via the functor $\PrK$ as a DG categorical manifestation of topological $\TT^{2n}$-duality. If $A, B$ are stable $C^*$-algebras and the $\KK_0(A,B)$-class is invertible, then the induced morphism between the $\K$-theories of the two DG categories $\PrK(A)$ and $\PrK(B)$ gives rise to an isomorphism between the topological $\K$-theories of $A$ and $B$ (using Theorem \ref{KSpec}). This phenomenon agrees with the well-known interchange of topological $\K^{\textup{top}}_0$ and $\K^{\textup{top}}_1$ under $\TT^{2n+1}$-duality transformations, since one of the algebras was suspended to convert the original $\KK_1$-class into a $\KK_0$-class and topological $\K$-theory is Bott $2$-periodic. The article is organized as follows:

In Section \ref{NCC} we describe the noncommutative DG correspondence category $\DGcorr$ and the noncommutative $C^*$-correspondence category $\KKcat$. The correspondence categories $\DGcorr$ and $\KKcat$ are suitably linearized versions of certain categories of noncommutative spaces $\NCS$ and $\CSp$ respectively. The category $\NCS$ (resp. $\DGcorr$) is a slighly enlarged version of the Morita homotopy category of DG categories (resp. linearized version thereof), whose construction was outlined in \cite{TabThesis} and denoted $\Hmo$ (resp. $\Hmo_0$). The category $\DGcorr$ is a good candidate for the category of noncommutative correspondences in the setting of DG categories. In the operator algebraic setting we propose Kasparov's bivariant $\K$-theory category $\KKcat$ as a candidate for noncommutative correspondences. Most of the interesting homological invariants, that we know on DG categories (resp. $C^*$-algebras), factor through $\DGcorr$ (resp. $\KKcat$). 

In Section \ref{KQ} we introduce higher nonunital $\K$-theory after Quillen (see Definition \ref{NKQ}), denoted $\KQ$, and establish some elementary properties of this theory. Let $\kAlg$ denote the category of $k$-algebras, where $k$ is any field of characteristic $0$. We construct a functor $\Pr:\kAlg\functor\NCS$, whose (connective) Waldhausen $\K$-theory is defined to be the $\kQ$ spectrum of $k$-algebras. It is plausible that the $\KQ$-theory is interesting for a wider class of algebras than treated in this article. At the heart of this construction lies Quillen's description of the $\K'_0$-group of nonunital algebras \cite{QuiNonunitalK0}. Then we establish the matrix stability of this theory (see Proposition \ref{KMorInv}). We also show that any idempotent algebra, i.e., $R$ such that $R^2=R$, is connectively $\kQ$-excisive (see Proposition \ref{AlgExcision}). From the work of Suslin--Wodzicki \cite{SusWod2} we know that a $\QQ$-algebra satisfies excision in algebraic $\K$-theory if and only if it is {\it $\H$-unital}. We restrict our attention to the category of $C^*$-algebras with $*$-homomorphisms, denoted $\CAlg$ ($k=\RR$ or $\CC$), which is a (not full) subcategory of the category of idempotent $k$-algebras $\IAlg$. We construct a natural map of spectra from $\kQ(R)$ to the connective algebraic $\K$-theory spectrum $\bK^{\textup{alg}}(R)$ (under some restrictions on $R$), which induces an isomorphism at the level of $\pi_0$, when the input $R$ is a $C^*$-algebra.

In Section \ref{TopK} we establish a general excision property (connectively) of the $\KQ$-theory (see Theorem \ref{excision}) on the category $\IAlg$. Given any $C^*$-algebra, we construct a comparison map from its $\kQ$ spectrum to the connective cover of its topological $\K$-theory spectrum. However, the author does not have a description of the homotopy fibre of this comparison map. It is known that if a functor $F$ from the category of $C^*$-algebras to any other category is matrix stable, then $F(-\prot\cpt)$ is $C^*$-stable, where $\cpt$ is the $C^*$-algebra of compact operators (see Proposition 3.31 of \cite{CunMeyRos}). Using this stabilization and the excision property of the $\KQ$-theory, we show that connectively it agrees with the topological $\K$-theory via the comparison map, when applied to a stable $C^*$-algebra (see Theorem \ref{KSpec}). In the process it is shown that, if the category of noncommutative spaces $\NCS$ is suitably linearized, i.e., converted to the correspondence category $\DGcorr$, then the functor $\Pr$ becomes split exact with values in $\DGcorr$. Using the stabilized additive functor $\PrK=\Pr(-\prot\cpt):\KKcat\map\DGcorr$ (see Theorem \ref{MainThm}) one can also construct actions of the group $\Aut_{\KKcat}(A)$ on $\PrK(A)$ in $\DGcorr$ and consequently on the connective cover of the topological $\K$-theory spectrum $\bK^{\textup{top}}(A\prot\cpt)$ in $\HoSpt$ (see Example \ref{TorusEx}).

In Section \ref{TFT} we speculate about some features that a noncommutative topological field theory should possess and outline how one could possibly reconcile topological $\TT$-duality in that framework. Ideally noncommutative topological field theories, when eventually constructed, should admit the physical dualities, e.g., $\TT$-duality, $\SS$-duality, etc. as examples of explicit isomorphisms between two such theories. In particular, if a $\TT$-duality phenomenon is explained by a $\KK_1$-class, then one should be able to say how this class gives rise to an isomorphism between two theories. In \cite{BloDan} the authors gave a more geometric treatment of various noncommutative dualities using {\it curved DG (bornological) algebras}. However, their constructions were not `factoring through the bivariant category $\KKcat$'. We provide two instances where such factoring arguments can be useful. A discussion about the connection between $\KK$-dualities and topological $\TT$-dualities is the first one (see Example \ref{T-duality}). One can also deduce that, for any nuclear separable $C^*$-algebra $A$, the DG category $\PrK(A)$ is invariant in $\DGcorr$ under {\it strong deformation} of $A$, if $A$ and its strong deformation are suitably homotopy equivalent (see Example \ref{deformation}). Similar results at the level of $\K$-theory groups already exist in the literature \cite{DadLor} (see also \cite{RosKThQuant} for a survey). 

\vspace{3mm}
\noindent
{\bf Notations and conventions:} All algebras are assumed to be associative, but not necessarily commutative or unital, unless explicitly stated so. We are going to work over a ground field $k$ of characteristic $0$ and, while working with topological algebras, the field $k$ will be tacitly assumed to be $\CC$. Unless otherwise stated, all tensor products are assumed to be over $k$. The topological tensor product between two $C^*$-algebras is denoted by $\prot$. Since, whenever we take a tensor product of two $C^*$-algebras, one of the algebras is actually nuclear, we do not need to worry about the distinction between the maximal and the minimal tensor products.

\vspace{3mm}
\noindent
{\bf Acknowledgements.} Under various stages of development of this article the author has benefitted from email correspondences with B. Keller, Matilde Marcolli, Fernando Muro, J. Rosenberg and B. To{\"e}n. The author is particularly grateful to R. Meyer and the anonymous referees for pointing out several inaccuracies in the earlier drafts of this article. The author also gratefully acknowledges the hospitality of the Fields Institute and Institut des Hautes \'Etudes Scientifiques, where much of this work was carried out.

\section{Noncommutative correspondence categories} \label{NCC}

\subsection{The category of DG categories $\DGcat$} \label{DGCAT}
The basic references for the background material, that we require, about the category of small DG categories are \cite{KelDG} and \cite{TabThesis}. The standard references for the theory of model categories are \cite{HomotopicalAlgebra,Hov,Hir}. A {\it DG category} is a category enriched over the symmetric monoidal category of cochain complexes of $k$-linear spaces, so that the composition of morphisms satisfies a Leibniz rule: $d(f\circ g)=df\circ g + (-1)^{|f|}f\circ dg$. Let $\DGcat$ stand for the category of all small DG categories. The morphisms in this category are {\it DG functors}, i.e., (enriched) functors inducing morphisms of $\Hom$-complexes. We provide one generic construction of DG categories which will be useful for later purposes.

\begin{ex} \label{DGex}
Given any $k$-linear category $\mathcal{M}$ it is possible to construct a DG category $\cC_{dg}(\mathcal{M})$ with cochain complexes $(M^\bullet,d_M)$ over $\mathcal{M}$ as objects. One sets $\Hom(M^\bullet,N^\bullet) = \prod_n \Hom(M^\bullet ,N^\bullet)_n$, where $\Hom(M^\bullet,N^\bullet)_n$ denotes the component of morphisms of degree $n$, i.e., $f:M^\bullet\map N^\bullet[n]$ is a map of graded objects and whose differential is the graded commutator $d_N f - (-1)^{|f|}f d_M$.  It is easily seen that the zeroth cocycle category $\Z^0(\cC_{dg}(\cM))$ reduces to the category of cochain complexes over $\cM$ and the zeroth cohomology category $\H^0(\cC_{dg}(\cM))$ produces the homotopy category of complexes over $\cM$.
\end{ex}

Now we recall the notion of the derived category of a DG category as in \cite{Kel}. Let $\cD$ be a small DG category. A right DG $\cD$-module is by definition a DG functor $M:\cD^{\op}\to
\cC_{dg}(k)$, where $\cC_{dg}(k)$ denotes the DG category of cochain complexes of
$k$-linear spaces. We denote the DG category of right DG modules over $\cD$ by $\cC_{dg}(\cD)$. Every object $X$ of $\cD$ defines canonically a {\it free} or {\it representable} right $\cD$-module $h(X):= \Hom_{\cD}(-,X)$. A morphism of DG modules $f:L\to M$ is by definition a natural transformation of DG functors such that $fX:LX\to MX$ is a morphism of complexes for all $X\in\textup{Obj}(\cD)$. We call such an $f$ a quasi-isomorphism if $fX$ is a quasi-isomorphism for all $X$. The derived category $\textup{D}(\cD)$ of $\cD$ is defined to be the localization of the category $\cC_{dg}(\cD)$ with respect to the class of quasi-isomorphisms. This localization does not create any set-theoretic problems, since $\textup{D}(\cD)$ is actually the homotopy category of the {\it projective} model structure on $\cC_{dg}(\cD)$, whose weak equivalences are the quasi-isomorphisms and whose fibrations are the epimorphisms (see Theorem 3.2 of \cite{KelDG}). Every object is fibrant in this model structure and the cofibrant objects are determined by the usual lifting property. In particular, the free or representable modules are cofibrant. The category $\textup{D}(\cD)$ admits a triangulated category structure as explained in Subsection 3.4 of ibid. Given any DG category $\cC$ the category $\H^0(\cC)$, whose objects are the same as $\cC$ and whose morphisms are the zeroth cohomology groups of those of $\cC$, is called the {\it homotopy} category of $\cC$. The Yoneda functor $X\mapsto h(X)$ induces an embedding $\H^0(\cD)\to \textup{D}(\cD)$. The triangulated subcategory of $\textup{D}(\cD)$ generated by the free DG $\cD$-modules $h(X)$ under translations in both directions, formation of mapping cones and passage to direct factors is called the derived category of right {\it perfect $\cD$-modules} and denoted by $\per(\cD)$. The objects of $\per(\cD)$ are precisely the compact objects of $\textup{D}(\cD)$ (see Corollary 3.7 of \cite{KelDG}).

\begin{rem} \label{repDG}
The perfect derived category is idempotent complete. The full subcategory of $\cC_{dg}(\cD)$, consisting of right perfect $\cD$-modules, is denoted by $\per_{dg}(\cD)$, so that one concludes $\H^0(\per_{dg}(\cD))=\per(\cD)$. 
\end{rem}

\subsection{The category of noncommutative DG correspondences $\DGcorr$} \label{MoritaModel} A DG functor $F:\cC\to \cD$ is called a {\it Morita morphism} if the derived base change functor $\mathbb{L}F_*:\textup{D}(\cC)\to \textup{D}(\cD)$ is an equivalence of triangulated categories. There are some other equivalent formulations of a Morita morphism. Thanks to Tabuada \cite{TabThesis} we know that $\DGcat$ has a {\it cofibrantly generated Quillen model category} structure, where the weak equivalences are the Morita morphisms. The generating cofibrations and acyclic cofibrations are described in {\it ibid.} A fibrant object in this model structure is a DG category $\cC$, such that the Yoneda map $\cC\map\per_{dg}(\cC)$ induces an equivalence of categories $\H^0(\cC)\map\per(\cC)$. A {\it quasi-equivalence} $G:\cC\map\cD$ between two DG categories is a DG functor such that it induces a quasi-isomorphism of $\Hom$-complexes and induces an equivalence of homotopy categories $\H^0(\cC)\map\H^0(\cD)$. This model structure is actually a left Bousfield localization of another cofibrantly generated model structure on $\DGcat$, where the weak equivalences are the quasi-equivalences. The homotopy category of $\DGcat$, with quasi-equivalences as weak equivalences, is denoted by $\Hqe$. It is known that if two DG categories are quasi-equivalent, then they are Morita isomorphic, but the converse need not be true. The Morita homotopy category $\Hmo$ is defined to be the localization of $\DGcat$ with respect to the Morita morphisms, i.e., the homotopy category of the Morita model structure on $\DGcat$. Given any DG category $\A$ one constructs the functorial Morita fibrant replacement as $\cA\mapsto\per_{dg}(\cA)$.  This fibrant replacement functor induces an equivalence between $\Hmo$ and a full subcategory of $\Hqe$. 

Tensoring with a cofibrant DG category is not a left Quillen functor from the Morita model category $\DGcat$ to itself as it does not preserve cofibrations. Consequently, there is no adjoint right Quillen inner Hom functor from $\DGcat$ to itself. However, the homotopy category $\Hqe$ has an inner Hom functor which is denoted by $\rep_{dg}(-,?)$ (see \cite{ToeDG}). For the benefit of the reader we recall briefly its construction. Given any DG category $\cA$  one can construct a $\cC_{dg}(k)$-enriched model category structure on the DG category of right DG $\cA$-modules $\textup{D}_{dg}(\cA)$, whose homotopy category turns out to be equivalent to the derived category of $\cA$ \cite{ToeDG}. A DG functor $\phi:\cC\map\cD$ naturally gives rise to a $\cD^{\op}\otimes\cC$-module $M_\phi$, i.e., $M_\phi(-\otimes ?)=\Hom_\cD(\phi(?),-)$. One advantage of working in the DG setting is that every morphism (not necessarily isomorphisms) in $\Hmo$ becomes a generalized DG bimodule morphism or a noncommutative correspondence. In the geometric triangulated setting, e.g., when the triangulated category is of the form $\textup{D}^b(\Coh(X))$ for some smooth and proper variety $X$, such a result is true only for exact equivalences \cite{Orl1}. Let $\Int(\cA)$ denote the category of cofibrant-fibrant objects of this model category, which may be regarded as a $\cC_{dg}(k)$-enrichment of the derived category of $\cA$. If $\cC$ and $\cD$ are DG categories then their inner Hom DG category $\rep_{dg}(\cC,\cD)$ is by definition $\Int(\textup{D}_{dg}(\cD^{\op}\otimes\cC))$, so that $\pi_0(\rep_{dg}(\cC,\cD))\cong\Hom_{\Hqe}(\cC,\cD)$. 

\begin{rem} \label{functoriality}
We conclude that if $F,G:\cC\map\cD$ are DG functors, such that there is a natural isomorphism $F\cong G$, then $F=G$ in $\Hom_{\Hqe}(\cC,\cD)$.
\end{rem}

One obtains an additive category, which is denoted by $\Hmo_0$ in \cite{TabThesis,KelDG}, by applying the functor $\K_0$ (Grothendieck group) to the homotopy triangulated categories of the inner Hom DG categories. The composition of morphisms in $\Hmo_0$ is induced by the derived tensor product of DG categories. The canonical functor $\DGcat\functor\Hmo_0$, which is identity on objects and sends any morphism in $\Hom_{\DGcat}(\cC,\cD)$ to its class in $\K_0(\rep_{dg}(\cC,\cD))$ factors through $\Hmo$ and it is the universal derived Morita invariant and split exact functor \cite{TabThesis}. 

Unfortunately, for many practical purposes (including the applications that we have in mind) small categories are too restrictive. Therefore, we allow the category of {\it noncommutative DG spaces} $\NCS$ and that of {\it noncommutative DG correspondences} $\DGcorr$ to include all essentially (or skeletally) small DG categories. More specifically, we define $\NCS$ [resp. $\DGcorr$] to be the category of all essentially small DG categories, with $\Hom_{\NCS}(\cC,\cD):=\pi_0(\Int(\textup{D}_{dg}(\cD^{\op}\otimes\cC)))$ [resp. $\Hom_{\DGcorr}(\cC,\cD)=\K_0(\Int(\textup{D}_{dg}(\cD^{\op}\otimes\cC)))$]. As a category $\NCS$ [resp. $\DGcorr$] is of course equivalent to $\Hmo$ [resp. $\Hmo_0$]. By abuse of notation, we continue to denote the category of all essentially small DG categories, with DG functors as morphisms, by $\DGcat$.

\begin{rem}
There are certain set-theoretic issues in defining the Morita localization of $\DGcat$, if the objects are no longer assumed to be small. By fixing a large enough universe, these problems can be avoided (see Section 2 of \cite{ToeDG}). Similarly, while defining the $\K$-theory of DG categories (as needed in Section \ref{KQ}), one needs to produce a small Waldhausen category from DG categories and such issues have been addressed in \cite{ThoTro,DugShi}. 
\end{rem}

\subsection{Noncommutative $C^*$-correspondence category $\KKcat$} \label{Corr}
Let us denote the category of all $C^*$-algebras with $*$-homomorphisms as morphisms by $\CAlg$ and the full subcategory consisting of separable $C^*$-algebras by $\Csep$. Typically, while talking about Kasparov's bivariant $\K$-theory we shall restrict our attention to $\Csep$. 

The category of commutative separable $C^*$-algebras corresponds to that of second countable, locally compact and Hausdorff topological spaces. Kasparov developed $\KK$-theory by unifying $\K$-theory and $\K$-homology into a bivariant  $\ZZ/2\ZZ$-graded theory and obtained interesting positive instances of the Baum--Connes conjecture \cite{KasKK1,KasKK2}. A remarkable feature of this theory is the existence of an associative Kasparov product on the $\KK_*$-groups. One can construct a category of separable $C^*$-algebras, where the morphisms are declared to be Kasparov's bivariant $\KK_0$-groups. The Kasparov product is used to define the composition of morphisms. This category, denoted $\KKcat$, plays the role of the category of noncommutative correspondences in the operator algebraic setting (see, for instance, \cite{ConSka,ConConMar1}). This categorical point of view of $\KK$-theory was proposed in \cite{Hig1}. Morphisms in the $\KK_0$-groups can be expressed as homotopy classes of even Kasparov bimodules. Somewhat miraculously in the end one finds that all the analysis disappears and the morphisms are purely determined by some topological data using the framework of quasi-homomorphisms \cite{CunKK}.

Let $\cpt$ denote the algebra of compact operators on a separable Hilbert space. We set $A_\cpt=A\prot\cpt$. A $C^*$-algebra $A$ is called {\it stable} if $A\cong A\prot\cpt$, i.e., $*$-isomorphic as $C^*$-algebras. For instance, the algebra of compact operators $\cpt$ is itself stable. The mapping $A\map A\prot\cpt$ sending $a\longmapsto a\prot\pi$, where $\pi$ is any rank one projection, is called the {\it corner embedding}. This map is clearly nonunital. A functor from $\CAlg$ is called {\it $C^*$-stable} if the corner embedding is mapped to an isomorphism by the functor. There is a canonical functor $\iota:\Csep\functor\KKcat$, where $\Csep$ is the category of separable $C^*$-algebras with $*$-homomorphisms. The functor $\iota$ is identity on the objects and sends a $*$-homomorphism to a $\KK_0$-bimodule class by converting the target into one in the obvious manner. 

\begin{rem} \label{Morita}
There is a counterpart of $\NCS$ in the world of separable $C^*$-algebras, which we denote by $\CSp$. For the details we refer the readers to \cite{Mey1}, where it was called the category of {\it correspondences} in the operator algebraic setting. We regard this category as a category of noncommutative spaces where stably isomorphic algebras are identified. For separable $C^*$-algebras being stably isomorphic is equivalent to being Morita--Rieffel equivalent \cite{BGR}. The objects of $\CSp$ are separable $C^*$-algebras and a morphism $A\map B$ in it is an isomorphism class of a right Hilbert $B_\cpt$-module $\mathcal{E}$ with a nondegenerate $*$-homomorphism $f:A_\cpt \map \cpt(\mathcal{E})$. There is a canonical functor $\Csep\functor\CSp$ which is the universal $C^*$-stable functor on $\Csep$ (Proposition 39 of \cite{Mey1}). Intuitively, the category $\CSp$ is the category of separable $C^*$-algebras with some generalized morphisms, with built-in Morita--Rieffel equivalence. In $\CSp$ any separable $C^*$-algebra $A$ is isomorphic to a stable $C^*$-algebra functorially, {\it viz.,} its own stabilization $A\map A_\cpt$. 
\end{rem}

\section{Higher nonunital Quillen $\K'$-theory} \label {KQ}
Let $k$ be a field and let $\kAlg$ denote the category of $k$-algebras with not necessarily unital $k$-algebra homomorphisms. A module $M$ over a $k$-algebra $R$ will always mean a $k$-algebra module, i.e., $M$ has an underlying $k$-linear space with $k$-linear structure maps. A homomorphism of such modules will be both $k$-linear and $R$-linear by definition. We briefly recall a construction of Quillen \cite{QuiNonunitalK0}, which plays a central role in this article. Given any $k$-algebra $R$, embedded as a two-sided ideal in a unital $k$-algebra $S$, we consider the category $\Pr(S,R)$ whose objects are cochain complexes $X$ of right $S$-modules, which satisfy the following two conditions:

\begin{enumerate}
\item \label{cond1} each $X$ is homotopy equivalent to a strictly perfect complex, where a strictly perfect complex is a bounded complex of finitely generated and projective modules,
\item \label{cond2} the canonical map $X\otimes_{S}R\map X$ is a homotopy equivalence or, equivalently, $X/XR$ is an acyclic complex (see Proposition 2.2 of ibid. for some other equivalent descriptions of this condition). 
\end{enumerate}

\noindent
If $S=\tilde{R}$, i.e., the unitization of $R$, then we denote $\Pr(S,R)$ simply by $\Pr(R)$. We enrich the category $\Pr(R)$ over cochain complexes as explained in Example \ref{DGex} to make it a $k$-linear DG category. For a unital $k$-algebra $R$, the category $\Pr(R)$ may be regarded as a DG enrichment of the $k$-linear category of {\it homotopy perfect} complexes, i.e., complexes which are homotopy equivalent to strictly perfect complexes. Since every $P\in\Z^0(\Pr(R))$ is homotopy equivalent to a strictly perfect complex, by a result of Neeman \cite{NeeCompObj} it is a compact object in the derived category of cochain complexes of right $\tilde{R}$-modules. 

Any $k$-algebra homomorphism $g:R\map R'$ between possibly nonunital $k$-algebras extends uniquely to a unital map (preserving the adjoined unit) $\tilde{R}\map\tilde{R'}$ between their unitizations. Then one can consider $\tilde{R'}$ as a unital $k$-symmetric $\tilde{R}$-$\tilde{R'}$-bimodule (left structure is given by the unitization of ${g}$), which gives rise to a functor $g_*:= -\otimes_{\tilde{R}} \tilde{R'}:\Pr(R)\map\Pr(R')$. Since $g_*$ preserves cochain homotopy equivalences, one can check that condition \ref{cond1} above is preserved by it. In order to check condition \ref{cond2} we may restrict to strictly perfect complexes. Let $X\in\Pr(R)$ be a strictly perfect complex, whence $X\otimes_{\tilde{R}}\tilde{S}$ is a strictly perfect complex of right $\tilde{S}$-modules. Now $X\otimes_{\tilde{R}}\tilde{S} /(X\otimes_{\tilde{R}} \tilde{S})S \cong X\otimes_{\tilde{R}}\tilde{S}\otimes_{\tilde{S}} (\tilde{S}/S)\cong X\otimes_{\tilde{R}} k\cong X\otimes_{\tilde{R}}(\tilde{R}/R)\cong X/XR$ and $X/XR$ is acyclic, since $X\in\Pr(R)$. Therefore, we obtain a unital base change functor $\Pr:\kAlg\functor\NCS$ (see Remark \ref{functoriality}). Evidently, our construction $A\mapsto \Pr(A)$ is also functorial with respect to $*$-homomorphisms between $C^*$-algebras and produces a functor $\Pr:\CAlg\functor\NCS$. Composing it with the functor $\NCS\functor\DGcorr$ we get an additive category valued functor. We shall also denote the restricted functor to the full subcategory $\Csep$ by $\Pr$. Let us state one of the key results of \cite{QuiNonunitalK0} in the following Remark, which will be used frequently.

\begin{rem} \label{MorInv}
Let $S, S'$ be two unital $k$-algebras containing two-sided ideals $R, R'$ respectively. Let $\left(\begin{smallmatrix} S & M\\ M' & S'\end{smallmatrix}\right)$ be a Morita context such that the $S-S$-linear span of the image $p:M\otimes_{S'} M'\map S$ is $R\subset S$ and the $S'-S'$-linear span of the image $q:M'\otimes_S M\map S'$ is $R'\subset S'$. Then the functors $-\otimes_S M:\Pr(S,R)\map\Pr(S',R')$ and $-\otimes_{S'} M':\Pr(S',R')\map\Pr(S,R)$ are quasi-inverses between categories $\H^0(\Pr(S,R))$ and $\H^0(\Pr(S',R'))$. This implies that $-\otimes_S M$ and $-\otimes_{S'} M'$ are mutual inverses in $\NCS$, since $\H^n(\Hom(X,Y))\cong\H^0(\Hom(X,Y[n]))\overset{\sim}{\map}\H^0(\Hom((X\otimes_S M),(Y\otimes_S M)[n]))\cong\H^n(\Hom(X\otimes_S M, Y\otimes_S M))$. Furthermore, for any $X\in\Pr(S,R)$ and $Y\in\Pr(S',R')$, the canonical maps $X\otimes_{S}M\otimes_{S'} M'\map X$ and 
$Y\otimes_{S'}M'\otimes_S M\map Y$, which induce natural transformations $-\otimes_{S'}M'\otimes_S M\map \Id_{\Pr(S,R)}$ and $-\otimes_{S'}M'\otimes_S M\map \Id_{\Pr(S',R')}$ respectively, are homotopy equivalences (see Theorem 3.1 and Corollary 3.2 of \cite{QuiNonunitalK0}).

\noindent
This invariance result is stronger than Morita invariance implemented by equivalence bimodules. If $R=S$ and $R'=S'$, then one obtains the usual Morita invariance.
\end{rem}

\begin{lem} \label{RSR}
Let $0\map R\map S\map T\map 0$ be an exact sequence in $\kAlg$, where $S$ and $T$ are unital. Then $\Pr({S},R)\cong\Pr(R)$ in $\NCS$.
\end{lem}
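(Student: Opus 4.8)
The plan is to exhibit $(S,R)$ and $(\tilde R,R)$ as the two corners of a single Morita context satisfying the span hypotheses of Remark \ref{MorInv}, and then simply quote that result. Both $S$ and $\tilde R$ contain $R$ as a two-sided ideal, so the first thing one tries is the symmetric context with $M=M'=R$ and the two multiplication pairings $R\otimes_{\tilde R}R\map S$ and $R\otimes_S R\map\tilde R$. This fails: the images span $R^2$, and their bilateral spans $SR^2S$ and $\tilde R R^2\tilde R=R^2$ recover only $R^2$, whereas Remark \ref{MorInv} demands the full ideal $R$. Since $R$ is not assumed idempotent, the symmetric context is hopeless, and the whole point of the argument is to find an asymmetric one.

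Concretely, I would take $M=S$, viewed as an $S$-$\tilde R$-bimodule by left multiplication and by the right action $s\cdot u:=s\,\phi(u)$, where $\phi:\tilde R\map S$ is the canonical unital homomorphism extending $R\hookrightarrow S$ (here unitality of $S$ is used, and $\phi$ restricts to the identity on $R$). I would take $M'=R$, viewed as a $\tilde R$-$S$-bimodule via the left ideal structure $R\subseteq\tilde R$ and the right ideal structure $R\subseteq S$; a one-line check with $u=\lambda+\rho\in\tilde R$ shows these two actions commute. The pairings are the multiplication maps $p:S\otimes_{\tilde R}R\map S,\ s\otimes r\mapsto sr$ and $q:R\otimes_S S\map\tilde R,\ r\otimes s\mapsto rs$, both landing in $R$. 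That $p,q$ are well defined over the balanced tensor products, and that the Morita-context identities $p(m\otimes m')\,m''=m\cdot q(m'\otimes m'')$ and $q(m'\otimes m)\,n'=m'\cdot p(m\otimes n')$ hold, are immediate from associativity in $S$ together with $\phi|_R=\id$. Thus $\left(\begin{smallmatrix}S & S\\ R & \tilde R\end{smallmatrix}\right)$ is a genuine Morita context.

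It then remains to read off the spans. As $R$ is a two-sided ideal of the unital ring $S$, the image of $p$ is $SR=R$, with $S$-$S$-linear span $SRS=R$; likewise the image of $q$ is $RS=R$, with $\tilde R$-$\tilde R$-linear span $\tilde R R\tilde R=R$ (using $\tilde R R=R\tilde R=R$). Hence both recovered ideals are exactly $R$, i.e. we are in the situation $R=R'=R$ of Remark \ref{MorInv}. That remark applies verbatim and yields that $-\otimes_S S$ and $-\otimes_{\tilde R}R$ are mutually inverse equivalences between $\H^0(\Pr(S,R))$ and $\H^0(\Pr(\tilde R,R))$, hence mutual inverses in $\NCS$; therefore $\Pr(S,R)\cong\Pr(R)$ in $\NCS$.

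The only real obstacle is this choice of context: recognizing that the symmetric guess is forced to fail for non-idempotent $R$, and that replacing the copy of $R$ sitting over the diagonal entry $S$ by $S$ itself is exactly what restores the span condition. Everything after that choice is routine bookkeeping of bimodule actions. Note in passing that the hypothesis ``$T$ unital'' is automatic once $S$ is unital, so only unitality of $S$ is actually used.
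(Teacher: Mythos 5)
Your proof is correct and takes essentially the same route as the paper: the paper likewise exhibits a Morita context with corners $\tilde R$ and $S$ and off-diagonal bimodules $S$ and $R$, checks that both bilateral spans recover exactly the ideal $R$, and then quotes Remark \ref{MorInv} to get mutually inverse functors, hence $\Pr(S,R)\cong\Pr(R)$ in $\NCS$. The only (immaterial) difference is orientation: the paper uses the context $\left(\begin{smallmatrix}\tilde R & S\\ R & S\end{smallmatrix}\right)$, so its inverse pair is the base-change functor $-\otimes_{\tilde R}S$ together with $-\otimes_{S}R$, whereas your transposed context $\left(\begin{smallmatrix}S & S\\ R & \tilde R\end{smallmatrix}\right)$ produces the restriction functor $-\otimes_{S}S$ together with $-\otimes_{\tilde R}R$.
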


\begin{proof}
There is a morphism of exact sequences in $\kAlg$

\beqn
\xymatrix{
0\ar[r] & R\ar[r] \ar[d]_{\id} & \tilde{R}\ar[r] \ar[d] & k\ar[r] \ar[d] & 0\\
0\ar[r] & R\ar[r] & {S}\ar[r] & {T}\ar[r] & 0
}
\eeqn One uses the Morita context $\left(\begin{smallmatrix} \tilde{R} & {S}\\ R & {S}\end{smallmatrix}\right)$ to construct maps $-\otimes_{{S}}R:\Pr(S,R)\map\Pr(R)$ and $-\otimes_{\tilde{R}} {S}:\Pr(R)\map\Pr({S},R)$. From Remark \ref{MorInv} we conclude that $-\otimes_{{S}}R$ and $-\otimes_{\tilde{R}}{S}$ are mutual inverses between $\Pr(R)$ and $\Pr(\tilde{S},R)$ in $\NCS$.
\end{proof}

\noindent
A functor from $\kAlg$ to any category is called {\it $\MM_2$-stable} if the image of the corner embedding $R\map \MM_2(R)$ sending $r\mapsto\left(\begin{smallmatrix} r & 0\\ 0 & 0\end{smallmatrix}\right)$ is an isomorphism.

\begin{lem} \label{MStable}
The functor $\Pr:\kAlg\functor\NCS$ is $\MM_2$-stable.
\end{lem}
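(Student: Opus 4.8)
The plan is to deduce $\MM_2$-stability from Quillen's strong Morita invariance (Remark \ref{MorInv}) together with Lemma \ref{RSR}. Write $c:R\map\MM_2(R)$, $r\mapsto\left(\begin{smallmatrix} r & 0\\ 0 & 0\end{smallmatrix}\right)$, for the corner embedding; it extends to a unital map $\tilde c:\tilde R\map\widetilde{\MM_2(R)}$ and induces $c_*=-\otimes_{\tilde R}\widetilde{\MM_2(R)}:\Pr(R)\map\Pr(\MM_2(R))$. I must show that $c_*$ is invertible in $\NCS$. The strategy is to realize $c_*$, up to the identifications of Lemma \ref{RSR}, as the equivalence coming from a Morita context, and then invoke Remark \ref{MorInv}.

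First I would fix the ambient unital algebras. Take $S=\tilde R$ with its ideal $R$, and $S'=\MM_2(\tilde R)$ with its ideal $\MM_2(R)$. Since $\MM_2(\tilde R)/\MM_2(R)\cong\MM_2(k)$ is unital, Lemma \ref{RSR} gives $\Pr(\MM_2(\tilde R),\MM_2(R))\cong\Pr(\MM_2(R))$ in $\NCS$; the same applies to $\widetilde{\MM_2(R)}$, so the two unital thickenings of $\MM_2(R)$ agree in $\NCS$. For the Morita context $\left(\begin{smallmatrix} S & M\\ M' & S'\end{smallmatrix}\right)$ I take the first-row and first-column bimodules $M=\left(\begin{smallmatrix} R & R\end{smallmatrix}\right)$ (a $\tilde R$-$\MM_2(\tilde R)$-bimodule) and $M'=\left(\begin{smallmatrix}\tilde R\\ \tilde R\end{smallmatrix}\right)$ (a $\MM_2(\tilde R)$-$\tilde R$-bimodule), with all actions given by matrix multiplication inside $\MM_3(\tilde R)$, so that the Morita identities are automatic. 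The pairing $p:M\otimes_{S'}M'\map\tilde R$ sends $(a,b)\otimes\left(\begin{smallmatrix} x\\ y\end{smallmatrix}\right)\mapsto ax+by$; taking $b=0$, $x=1$ already yields every element of $R$, so its $S$-$S$-linear span is exactly $R$. Dually $q:M'\otimes_S M\map\MM_2(\tilde R)$ sends $\left(\begin{smallmatrix} x\\ y\end{smallmatrix}\right)\otimes(a,b)\mapsto\left(\begin{smallmatrix} xa & xb\\ ya & yb\end{smallmatrix}\right)$, whose $S'$-$S'$-linear span is all of $\MM_2(R)$. The key point is the deliberate asymmetry — entries from $R$ in one factor and from $\tilde R$ in the other — which forces the two spans to land exactly on the ideals $R$ and $\MM_2(R)$ without any idempotency hypothesis on $R$ (a symmetric choice would only produce the ideal generated by $R^2$). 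Remark \ref{MorInv} then shows that $-\otimes_S M$ and $-\otimes_{S'}M'$ are mutually inverse in $\NCS$, so $\Pr(R)=\Pr(\tilde R,R)\cong\Pr(\MM_2(\tilde R),\MM_2(R))\cong\Pr(\MM_2(R))$.

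The remaining, and I expect the only delicate, step is to check that the equivalence just produced is genuinely the functor $c_*$ induced by the corner embedding, since that is what the definition of $\MM_2$-stability demands. By Remark \ref{functoriality} it is enough to exhibit a natural isomorphism of the representing bimodules in $\NCS$. Concretely, the left $\tilde R$-action on $\widetilde{\MM_2(R)}$ through $\tilde c$ factors through the corner idempotent $E_{11}=\left(\begin{smallmatrix} 1 & 0\\ 0 & 0\end{smallmatrix}\right)$, and after tensoring down to $\MM_2(R)$ via the equivalence of Lemma \ref{RSR} the bimodule $\widetilde{\MM_2(R)}$ is identified with $E_{11}\MM_2(R)$, i.e. with the first-row module $M$. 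This Peirce-decomposition identification is routine but fiddly, and is precisely where one must be careful about the two different unital thickenings $\MM_2(\tilde R)$ and $\widetilde{\MM_2(R)}$. Granting it, $c_*$ is exhibited as a Morita-context equivalence and is therefore invertible in $\NCS$, which is exactly the assertion that $\Pr$ is $\MM_2$-stable.
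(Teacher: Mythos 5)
Your proof is correct and takes essentially the same route as the paper: the paper's proof uses exactly this Morita context --- the row bimodule $Q=\left(\begin{smallmatrix} A & A\\ 0 & 0\end{smallmatrix}\right)$ with entries in the ideal and the column bimodule $P=\left(\begin{smallmatrix} \tilde{A} & 0\\ \tilde{A} & 0\end{smallmatrix}\right)$ with entries in the unitization, with actions defined through the corner embedding --- verifies the same two span conditions, and then concludes via Remark~\ref{MorInv} and Lemma~\ref{RSR} that $\Pr(A)\cong\Pr(\MM_2(\tilde{A}),\MM_2(A))\cong\Pr(\MM_2(A))$. The compatibility you flag in your final paragraph (that the equivalence so produced really is the functor induced by the corner embedding) is not spelled out in the paper at all, being only implicit in the choice of bimodule actions, so on that point you are if anything more careful than the published proof.
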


\begin{proof}
For any $A\in\kAlg$, set $M=\MM_2(\tilde{A})$. Let $M'$ be the unitization of $\MM_2(A)$ with the canonical map $M'\overset{\theta}{\map} M$. Consider the Morita context $\left(\begin{smallmatrix}\tilde{A} & Q\\ P & M\end{smallmatrix}\right)$, where $Q=\left(\begin{smallmatrix}{A} & {A}\\ 0 & 0\end{smallmatrix}\right)$ and $P=\left(\begin{smallmatrix}\tilde{A} & 0\\ \tilde{A} & 0\end{smallmatrix}\right)$. Here the left action of $\tilde{A}$ on $Q$ and the right action of $\tilde{A}$ on $P$ are given by the map $\tilde{A}\map M'\overset{\theta}{\map} M$ where the first map comes from the unitization of the corner embedding $A\overset{\iota}{\map}\MM_2(A)$. The $M-M$ bimodule generated by the image of the map $P\otimes_{\tilde{A}}Q\overset{\gamma}{\map} M$ sending $p\otimes q\mapsto pq$ is $\MM_2(A)$. Clearly the image lies inside $\MM_2(A)$.  Let $e_{ij}$ denote the $\tilde{A}$-valued $2\times 2$ matrix with $1\in\tilde{A}$ as its $ij$-th entry and elsewhere $0$. Given any $\left(\begin{smallmatrix}x & y\\ z & w\end{smallmatrix}\right)\in\MM_2(A)$, write it as $\left(\begin{smallmatrix}x & y\\ z & w\end{smallmatrix}\right)=(e_{11})(xe_{11}) +(e_{11})(ye_{12}) + (e_{21})(ze_{11}) +(e_{21})(we_{12})$, which shows that it is in the image of $\gamma$. Similarly, one checks that the $\tilde{A}-\tilde{A}$ bimodule generated by the image of the map $Q\otimes_M P \map \tilde{A}$ is $A$. It remains to apply the above Remark \ref{MorInv}  and Lemma \ref{RSR} to obtain the desired isomorphism $\Pr(A)\cong\Pr(M,\MM_2(A))\cong\Pr(\MM_2(A))$. 
\end{proof}

\begin{prop} \label{StabHtpy}
The functor $\Pr:\kAlg\functor\NCS$ is $\MM_n$-stable for all $n\in\NN$.
\end{prop}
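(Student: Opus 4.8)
The plan is to imitate the Morita-context mechanism of Lemma \ref{MStable} verbatim, with $n\times n$ matrices in place of $2\times 2$ ones; no genuinely new idea is required beyond checking that the relevant spans still come out correctly for arbitrary $n$. Fix $A\in\kAlg$, put $M=\MM_n(\tilde A)$, and let $e=e_{11}\in M$ carry $1\in\tilde A$ in its top--left slot. The corner $eMe$ is canonically $\tilde A$, with $A\subset\tilde A$ as its ideal, while $\MM_n(A)\subset M$ is the ideal to be compared against it. I would then form the Morita context $\left(\begin{smallmatrix}\tilde A & Q\\ P & M\end{smallmatrix}\right)$ in which $Q=e\,\MM_n(A)$ is the ``first row'' bimodule (entries in the ideal $A$) and $P=M\,e$ is the ``first column'' bimodule (entries in $\tilde A$), exactly parallel to the bimodules $\left(\begin{smallmatrix} A & A\\ 0 & 0\end{smallmatrix}\right)$ and $\left(\begin{smallmatrix}\tilde A & 0\\ \tilde A & 0\end{smallmatrix}\right)$ used in the case $n=2$.

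The next step is to verify the hypotheses of Remark \ref{MorInv}. Fullness of $e_{11}$, i.e. $Me_{11}M=M$, is immediate from the matrix-unit identity $e_{i1}\,e_{11}\,e_{1j}=e_{ij}$. The only real computation is the pair of span conditions: that the $\tilde A$--$\tilde A$-span of the image of $Q\otimes_M P\map\tilde A$ equals $A$, and that the $M$--$M$-span of the image of $P\otimes_{\tilde A} Q\map M$ equals $\MM_n(A)$. Both reduce to the identity $x_{ij}e_{ij}=(e_{i1})(x_{ij}e_{1j})$ for $x_{ij}\in A$, which exhibits an arbitrary element of $\MM_n(A)$ as a sum of products of a column in $P$ and a row in $Q$, precisely as in the expansion carried out in the proof of Lemma \ref{MStable}; the first condition is the degenerate ``$(1,1)$-entry'' version of the same computation.

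Granting these, Remark \ref{MorInv} supplies mutually inverse morphisms between $\Pr(\tilde A,A)$ and $\Pr(M,\MM_n(A))$ in $\NCS$, and Lemma \ref{RSR} (applied to $0\map A\map\tilde A\map k\map 0$ and to $0\map\MM_n(A)\map\MM_n(\tilde A)\map\MM_n(k)\map 0$, both having unital outer terms) identifies these with $\Pr(A)$ and $\Pr(\MM_n(A))$ respectively. Tracing through the construction, the resulting isomorphism $\Pr(A)\cong\Pr(\MM_n(A))$ is induced by base change along the corner embedding, so by Remark \ref{functoriality} it is exactly $\Pr$ applied to that embedding. I do not expect a serious obstacle: the argument is a uniform $n\times n$ restatement of Lemma \ref{MStable}, the only labor being the matrix-unit bookkeeping. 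The one point worth flagging is that one should \emph{not} attempt to bootstrap purely from the $n=2$ case via $\MM_{2m}(R)\cong\MM_2(\MM_m(R))$: this handles only even $n$, and the odd values still force one to invert the block inclusion $\MM_{n-1}(R)\hookrightarrow\MM_n(R)$ through essentially the same full-idempotent context, so the direct route above is both cleaner and uniform.
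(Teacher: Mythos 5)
Your construction is correct: the spans in your $n\times n$ Morita context do come out to $A$ and $\MM_n(A)$, and Remark \ref{MorInv} together with Lemma \ref{RSR} (the latter genuinely needed, since $\MM_n(\tilde A)$ is not the unitization of $\MM_n(A)$) yields $\Pr(A)\cong\Pr(\MM_n(A))$, in exact parallel with Lemma \ref{MStable}. However, this is not the paper's route, and your closing warning misjudges the alternative. The paper's entire proof is a citation of Lemma 3.10 of \cite{CunMeyRos}: for a functor from $\kAlg$ to \emph{any} category, $\MM_2$-stability automatically implies $\MM_n$-stability for all $n$, so all Morita-context work stays confined to the $2\times 2$ case. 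That implication is purely formal and does not stall at odd $n$: while the naive iteration $\MM_{2^k}(R)\cong\MM_2(\MM_{2^{k-1}}(R))$ indeed reaches only powers of $2$, the general case follows from a sandwich argument on upper-left block inclusions $j_{n,m}:\MM_n(R)\map\MM_m(R)$. Each $F(j_{n,2n})$ is invertible, being the corner embedding of the algebra $\MM_n(R)$, and for $n\le m\le 2n$ the factorizations $j_{n,2n}=j_{m,2n}\circ j_{n,m}$ and $j_{m,2m}=j_{2n,2m}\circ j_{m,2n}$ exhibit $F(j_{m,2n})$ as split epi and split mono, hence invertible; therefore $F(j_{n,m})$ is invertible, and $F(\iota_n)=F(j_{1,n})$ in particular. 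No Morita context or full-idempotent argument enters this step. So the comparison is: your route gives a self-contained proof with an explicit bimodule implementing the stability isomorphism for each $n$ (useful if one wants to trace the map concretely), at the cost of redoing bookkeeping that the formal lemma renders unnecessary; the paper's route isolates the only nonformal input in the $n=2$ computation. One last remark: your sentence identifying the composite isomorphism with $\Pr$ of the corner embedding, via Remark \ref{functoriality}, is asserted rather than verified, but the paper's own Lemma \ref{MStable} is no more explicit on this point, so this is not a gap relative to the paper's standard.
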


\begin{proof}
The assertion follows from the previous Lemma \ref{MStable} since, if a functor from $\kAlg$ to any category is $\MM_2$-stable, then it is automatically $\MM_n$-stable for all $n\in\NN$ (see Lemma 3.10 of \cite{CunMeyRos}). 
\end{proof}

Let $\cC$ be a category with a zero object $\ast$. Let there be two chosen subcategories $w\cC$ (subcategory of {\it weak equivalences}) and $co\cC$ (subcategory of {\it cofibrations}), such that every isomorphism of $\cC$ is a morphism in both $w\cC$ and $co\cC$. Suppose $co\cC$, in addition, satisfies the conditions:

\begin{enumerate}
\item For every object $C\in\cC$, the unique map $\ast\map C$ is a morphism in $co\cC$,

\item If $C\map D$ is a map in $co\cC$ and $C\map E$ is any map in $\cC$, then the pushout $D\sqcup_C E$ exists in $\cC$ and the canonical map $E\map D\sqcup_C E$ is in $co\cC$.
\end{enumerate}

The triple $(\cC,w\cC,co\cC)$ is called a {\it Waldhausen category structure on $\cC$} (or simply a Waldhausen category) if these data satisfy one more axiom `gluing lemma' (see Section 1.2 of \cite{Waldhausen}). A functor $F:\cC\map\cD$ between two Waldhausen categories is called {\it Waldhausen exact} if $F(co\cC)\subset co\cD$, $F(w\cC)\subset w\cD$, $F(\ast_\cC)\cong \ast_\cD$ and the canonical map $F(D)\sqcup_{F(C)} F(E)\map F(D\sqcup_C E)$ is an isomorphism whenever $C\map E$ is in $co\cC$.

\begin{ex}
Any Quillen exact category gives rise to a Waldhausen category with the admissible monomorphisms as cofibrations and the isomorphisms as weak equivalences.
\end{ex}

Waldhausen constructed a $\K$-theory, which takes any (small) Waldhausen category as input and produces a spectrum $\bK^{w}$. More precisely, $\bK^{w}$ is a functor from the category of Waldhausen categories with Waldhausen exact functors to the (triangulated) homotopy category of spectra $\HoSpt$. We refer the readers to  \cite{Waldhausen} for the details. This construction gives rise to a functor $\bK_{dg}:\DGcat\functor\HoSpt$. Given any DG category $\cC$ one constructs a Waldhausen category structure on the category of perfect $\cC$-modules $\Z^0(\per_{dg}(\cC))$ with cofibrations as module morphisms, which admit a graded splitting, and weak equivalences as quasi-isomorphisms. The homotopy category $\H^0(\per_{dg}(\cC))=\per(\cC)$ carries a triangulated category structure with the distinguished triangles being those, induced by the graded split exact sequences of perfect modules. Then one applies $\bK^w$ to the Waldhausen category $\Z^0(\per_{dg}(\cC)))$ to construct the $\K$-theory spectrum $\bK_{dg}(\cC)$. So, by definition, $\bK_{dg}(\cC)=\bK^{w}(\Z^0(\per_{dg}(\cC)))$ and the functor $\bK_{dg}:\DGcat\functor\HoSpt$ factors through $\NCS$ (and even $\DGcorr$, see \cite{Tab3,DugShi}). 

\begin{defn} \label{NKQ}
For any $k$-algebra $R$ we define {\it Quillen's nonunital (connective) $\K$-theory spectrum} of $R$, denoted by $\kQ(R)$, to be $\bK_{dg}(\Pr(R))$.
\end{defn}

\begin{rem}
One can also define a nonconnective version of $\kQ$ using Schlichting's techniques involving `Frobenius pairs' \cite{SchNegK}, but we do not investigate that theory here. One can replace $\K_0(\rep_{dg}(\cC,\cD))$ in the definition of morphisms of $\DGcorr$ by the nonconnective $\K$-theory spectrum of $\rep_{dg}(\cC,\cD)$ to obtain a variant of it, weakly enriched over the homotopy category of spectra and admitting a triangulated category structure (see \S 10 ibid.). The nonconnective version of $\kQ$ will be better related to this nonconnective variant of $\DGcorr$.
\end{rem}

Quillen defined $\K'_0(R)$ for a (possibly) nonunital ring $R$ as the free abelian group generated by the homotopy classes $[X]$ of the objects of $\Pr(\tilde{R},R)$ subject to relations $[X]=[X']+[X'']$, whenever $X'\map X\map X''$ is a short exact sequence of complexes, which is split in each degree (see \S 4 of \cite{QuiNonunitalK0}). We shall show below that $\pi_0(\kQ(R))=\KQ_0(R)$ agrees with Quillen's definition of nonunital $\K'_0$ of a possibly nonunital $k$-algebra $R$.

\noindent
As a immediate consequence of Proposition \ref{StabHtpy} one deduces the matrix stability of $\kQ$.

\begin{prop} \label{KMorInv}
The functor $\kQ:\kAlg\functor\HoSpt$ is $\MM_n$-stable for all $n\in\NN$. 
\end{prop}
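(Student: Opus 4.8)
The plan is to deduce $\MM_n$-stability of $\kQ$ directly from the corresponding property of the functor $\Pr$, established in Proposition \ref{StabHtpy}, by transporting it through the $\K$-theory functor $\bK_{dg}$. The essential point is that $\kQ=\bK_{dg}\circ\Pr$ by Definition \ref{NKQ}, and $\bK_{dg}$ factors through $\NCS$ (indeed through $\DGcorr$), as recalled just before Definition \ref{NKQ}. Since Proposition \ref{StabHtpy} asserts that the corner embedding $R\map\MM_n(R)$ is sent by $\Pr$ to an \emph{isomorphism} in $\NCS$, and $\bK_{dg}$ is a well-defined functor on $\NCS$, it takes isomorphisms to isomorphisms.

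Concretely, first I would fix $R\in\kAlg$ and $n\in\NN$ and consider the corner embedding $\iota:R\map\MM_n(R)$. Applying $\Pr$ gives a morphism $\Pr(\iota):\Pr(R)\map\Pr(\MM_n(R))$ in $\NCS$, which is an isomorphism by Proposition \ref{StabHtpy}. Next I would apply the functor $\bK_{dg}:\NCS\functor\HoSpt$ to this isomorphism. Because any functor carries isomorphisms to isomorphisms, $\bK_{dg}(\Pr(\iota))$ is an isomorphism in $\HoSpt$. Unwinding the definitions, $\bK_{dg}(\Pr(\iota))=\kQ(\iota)$ is precisely the map induced by the corner embedding on $\kQ$-spectra, so it is an isomorphism in $\HoSpt$. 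By the definition of $\MM_n$-stability recorded before Lemma \ref{MStable}, this is exactly the assertion that $\kQ:\kAlg\functor\HoSpt$ is $\MM_n$-stable.

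There is essentially no obstacle here: the entire content has been front-loaded into Proposition \ref{StabHtpy} (and ultimately into Quillen's invariance result, Remark \ref{MorInv}). The only point requiring minor care is the bookkeeping that $\bK_{dg}$ is genuinely a functor defined on $\NCS$ rather than merely on $\DGcat$, so that it can be fed an isomorphism that lives only in the localized category $\NCS$; this is guaranteed by the factorization $\bK_{dg}:\DGcat\functor\HoSpt$ through $\NCS$ noted in the text. Once that is acknowledged, the argument is a one-line composition of functors.

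\begin{proof}
By Definition \ref{NKQ} we have $\kQ=\bK_{dg}\circ\Pr$, and the functor $\bK_{dg}:\DGcat\functor\HoSpt$ factors through $\NCS$. For any $R\in\kAlg$ and any $n\in\NN$, Proposition \ref{StabHtpy} shows that the corner embedding $R\map\MM_n(R)$ is sent by $\Pr$ to an isomorphism in $\NCS$. Applying the functor $\bK_{dg}$, which is defined on $\NCS$, this isomorphism is carried to an isomorphism in $\HoSpt$. Hence $\kQ$ sends the corner embedding to an isomorphism, i.e., $\kQ:\kAlg\functor\HoSpt$ is $\MM_n$-stable for all $n\in\NN$.
\end{proof}
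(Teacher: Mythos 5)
Your proof is correct and is exactly the paper's argument: the paper derives Proposition \ref{KMorInv} as an immediate consequence of Proposition \ref{StabHtpy}, using that $\kQ=\bK_{dg}\circ\Pr$ and that $\bK_{dg}$ factors through $\NCS$, so the isomorphism $\Pr(R)\cong\Pr(\MM_n(R))$ in $\NCS$ is carried to a weak equivalence of spectra. Your attention to the factorization of $\bK_{dg}$ through $\NCS$ is precisely the bookkeeping point the paper relies on.
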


Let $S$ be a unital $k$-algebra and $R$ be a two-sided ideal in $S$. There is a Waldhausen category structure on $\Z^0(\Pr(S,R))$, where the weak equivalences are precisely the homotopy equivalences and the cofibrations are the {\it graded (or locally) split monomorphisms}, i.e., maps of cochain complexes, which are split monomorphisms in each degree. Let us endow $\Z^0(\Pr(R)):=\Z^0(\Pr(\tilde{R},R))$ with this Waldhausen category structure. The cofibration sequences in this Waldhausen category are all graded split exact sequences of complexes. 

\begin{defn}
For any $k$-algebra $R$, let us denote by $\bK'(R)$ the Waldhausen $\K$-theory spectrum of $\Z^0(\Pr(R))$. 
\end{defn} We shall be interested mostly in the following specific type of Waldhausen categories. A Waldhausen category is {\it permissible} in the sense of Thomason (see \cite{NeeRan}) if it is a full subcategory of a category of cochain complexes over an abelian category such that the cofibrations are graded split monomorphisms and weak equivalences contain quasi-isomorphisms. Both $\Z^0(\Pr(S,R))$ and $\Z^0(\per_{dg}(\Pr(S,R)))$ are permissible Waldhausen categories, whose homotopy categories are $\H^0(\Pr(S,R))$ and $\H^0(\per_{dg}(\Pr(S,R)))=\per(\Pr(S,R))$ respectively. The category $\Z^0(\per_{dg}(\Pr(S,R)))$ can be viewed as a full subcategory of cochain complexes over the abelian category $\textup{Fun}(\Pr(S,R),\textup{Vec}_k)$, i.e., the abelian functor category of $k$-linear functors from $\Pr(S,R)$ to $k$-linear spaces. We know that a bounded below acyclic complex of projective modules is contractible. A morphism in $\Z^0(\Pr(S,R))$ is a quasi-isomorphism if and only if it is a cochain homotopy equivalence, since every complex in it is homotopy equivalent to a strictly perfect complex. Observe that both categories are also {\it saturated} Waldhausen categories, i.e., the weak equivalences satisfy the condition: if $f, g$ and $fg$ are morphisms in the Waldhausen category; whenever any two of them are weak equivalences, then so is the third.

\noindent
Obviously the category $\H^0(\Pr(S,R))$ is closed under translations (in both directions). Let us declare the triangles, induced by the graded split short exact sequences of complexes in $\Z^0(\Pr(S,R))$, to be distinguished. This makes $\H^0(\Pr(S,R))$ into a triangulated subcategory of the homotopy category of complexes of right $S$-modules. The usual mapping cone of a morphism of complexes acts as the cone of a morphism in $\H^0(\Pr(S,R))$. Any $k$-algebra homomorphism $g:R\map R'$ induces an exact functor of triangulated categories $\H^0(\Pr(R))\map\H^0(\Pr(R'))$, i.e., $-\otimes_{\tilde{R}}\tilde{R'}$ is additive and preserves graded split exact sequences. Similarly, $\H^0(\per_{dg}(\Pr(S,R)))$ carries a triangulated category structure induced by the graded split short exact sequences of perfect $\Pr(S,R)$-modules.

\noindent
In the sequel let us denote a homotopy equivalence of complexes (resp. quasi-isomorphism of complexes) by $\simeq$ (resp. $\simeq_{qi})$, i.e., $X\simeq Y$ (resp. $X\simeq_{qi} Y$) mean that there is a homotopy equivalence (resp. quasi-isomorphism) between $X$ and $Y$, but the choice is not specified.

\begin{lem} 
For any two sided ideal $R$ of a unital $k$-algebra $S$, the Yoneda functor $h:\Pr(S,R)\map\per_{dg}(\Pr(S,R))$ induces a Waldhausen exact functor $h:\Z^0(\Pr(S,R))\map \Z^0(\per_{dg}(\Pr(S,R)))$. 
\end{lem}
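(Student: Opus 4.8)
The plan is to verify directly the four defining properties of a Waldhausen exact functor for $F=h$ between the two permissible Waldhausen categories at hand: the source $\Z^0(\Pr(S,R))$, whose weak equivalences are the homotopy equivalences and whose cofibrations are the graded split monomorphisms, and the target $\Z^0(\per_{dg}(\Pr(S,R)))$, whose weak equivalences are the quasi-isomorphisms and whose cofibrations are the graded-splittable module maps. Write $\cC:=\Pr(S,R)$, so that $h:\cC\map\per_{dg}(\cC)\subset\cC_{dg}(\cC)$ lands in the DG category of right $\cC$-modules. The entire argument rests on the single structural fact that $h$ is a \emph{fully faithful} DG functor: by the DG Yoneda lemma the induced map of $\Hom$-complexes $\Hom_\cC(X,Y)\overset{\sim}{\map}\Hom_{\cC_{dg}(\cC)}(h(X),h(Y))$ is an isomorphism of complexes, compatible with the graded composition in every degree. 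In particular $h$ is additive and sends the zero complex to the zero module, which immediately gives $h(\ast)\cong\ast$; moreover every graded datum between objects of $\cC$ — a degree-$n$ map or a graded splitting that need not be a cocycle — is transported faithfully to the representable modules.

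First I would record preservation of weak equivalences. Since $h$ is a DG functor it descends to homotopy categories as $\H^0(h):\H^0(\cC)\map\H^0(\per_{dg}(\cC))=\per(\cC)$, which is the Yoneda embedding into the derived category. A homotopy equivalence in $\Z^0(\cC)$ is precisely an isomorphism in $\H^0(\cC)$, so its image is an isomorphism in $\per(\cC)$, hence a homotopy equivalence and a fortiori a quasi-isomorphism of perfect modules; this yields $h(w\cC)\subset w\per_{dg}$.

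Next, preservation of cofibrations. Let $f:X\map Y$ be a graded split monomorphism in $\Z^0(\cC)$ with graded splitting $s:Y\map X$, a degree-$0$ element of $\Hom_\cC(Y,X)$ satisfying $s\circ f=\id_X$. Because the Yoneda isomorphism of $\Hom$-complexes respects composition in every degree, $h(s)$ is a degree-$0$ morphism of graded $\cC$-modules with $h(s)\circ h(f)=h(s\circ f)=\id_{h(X)}$; thus $h(f)$ admits a graded splitting and is a cofibration in the target. This already guarantees that the relevant pushouts exist on the target side.

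The hard part will be the pushout axiom, which is the only step beyond the formal properties of the Yoneda embedding. Given a cofibration $\epsilon:C\map E$ with graded splitting $\sigma:E\map C$, and any $\gamma:C\map D$ in $\Z^0(\cC)$, I would first note that pushouts of modules in the target are computed objectwise, so it suffices to show that for every $W\in\cC$ the functor $\Hom_\cC(W,-)$ carries the complex pushout $D\sqcup_C E=\coker\bigl(C\overset{(\gamma,-\epsilon)}{\map}D\oplus E\bigr)$ to the corresponding cokernel. The crucial observation is that $(\gamma,-\epsilon):C\map D\oplus E$ is itself a graded split monomorphism, a splitting being $(d,e)\mapsto-\sigma(e)$, so that $0\map C\map D\oplus E\map D\sqcup_C E\map 0$ is split in each degree as a sequence of graded modules. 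Each degree-$n$ component of $\Hom_\cC(W,-)$ is an additive functor of graded modules and hence preserves this degreewise split exact sequence; assembling over $n$ gives $\Hom_\cC(W,D\sqcup_C E)\cong\Hom_\cC(W,D)\sqcup_{\Hom_\cC(W,C)}\Hom_\cC(W,E)$ as complexes, naturally in $W$, which is exactly the statement that the canonical comparison $h(D)\sqcup_{h(C)}h(E)\map h(D\sqcup_C E)$ is an isomorphism. Finally, since this pushout sits in a distinguished triangle with the perfect modules $h(C)$ and $h(D)\oplus h(E)$, it remains perfect, so the construction stays inside $\Z^0(\per_{dg}(\cC))$. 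Combining the four verifications establishes that $h$ is Waldhausen exact.
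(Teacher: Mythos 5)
Your proof is correct and follows essentially the same route as the paper's: preservation of cofibrations and weak equivalences is the formal part, and the pushout axiom is verified exactly as in the paper, by applying $\Hom(P,-)$ to the graded split short exact sequence $0\map C\map D\oplus E\map D\sqcup_C E\map 0$ and using that degreewise split exactness is preserved under each graded component of the Hom functor. The extra details you supply (the DG Yoneda full faithfulness, the explicit splitting $(d,e)\mapsto -\sigma(e)$, and the perfectness of the resulting pushout) simply make the paper's terse argument explicit.
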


\begin{proof}
It is clear that the induced functor $h:\Z^0(\Pr(S,R))\map \Z^0(\per_{dg}(\Pr(S,R)))$ preserves graded split monomorphisms and sends cochain homotopy equivalences to quasi-isomorphisms. The pushout of the diagram $Y\hookleftarrow X\map Z$, $Y\hookleftarrow X$ being a cofibration, is described by the graded split exact sequence in $\Z^0(\Pr(R))$
\beqn
0\map X\map Y\oplus Z\map Y\oplus_X Z\map 0
\eeqn Applying $\Hom(P,-)$ to the above sequence one deduces that the canonical map 

\beqn
\Hom(P,Y)\oplus_{\Hom(P,X)}\Hom(P,Z)\map \Hom(P,Y\oplus_X Z)\eeqn is an isomorphism. This shows that $h(X)=\Hom(-,X)$ is Waldhausen exact.
\end{proof}

\begin{lem} \label{MorFib}
For any two sided ideal $R$ of a unital $k$-algebra $S$, the Waldhausen exact embedding $h:\Z^0(\Pr(S,R))\map\Z^0(\per_{dg}(\Pr(S,R)))$ induces an equivalence of triangulated homotopy categories $h:\H^0(\Pr(S,R))\map\per(\Pr(S,R))$.
\end{lem}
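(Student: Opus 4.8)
The plan is to show that $h$ is fully faithful and essentially surjective as a functor of triangulated categories, the former being essentially automatic and the latter carrying the real content. Write $\cD=\Pr(S,R)$. Full faithfulness comes for free from the Yoneda embedding: as recalled above, $X\mapsto h(X)$ induces a fully faithful functor $\H^0(\cD)\map\textup{D}(\cD)$, and since every representable module is perfect this factors through $\per(\cD)=\H^0(\per_{dg}(\cD))$. The preceding lemma already shows $h$ is Waldhausen exact, and because the distinguished triangles on both sides are precisely those induced by graded split short exact sequences, $h$ carries such sequences to such sequences; hence the induced functor $h\colon\H^0(\Pr(S,R))\map\per(\Pr(S,R))$ is exact (triangulated), and in particular its essential image is a triangulated subcategory of $\per(\cD)$ containing every representable $h(X)$.

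Next I would reduce essential surjectivity to a single property. By construction $\per(\cD)$ is the smallest triangulated subcategory of $\textup{D}(\cD)$ that contains the representables and is closed under translations, mapping cones, and passage to direct factors. The essential image of $h$ already contains the representables and is closed under translations and cones, since $\Pr(S,R)$ itself is closed under shifts and mapping cones: conditions \ref{cond1} and \ref{cond2} are readily seen to be inherited by $X[n]$ and by $\Cone(f)$ (for the latter, tensoring the degreewise split triangle $Y\to\Cone(f)\to X[1]$ with $S/R$ and using the long exact cohomology sequence shows $\Cone(f)/\Cone(f)R$ is acyclic once $X/XR$ and $Y/YR$ are). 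Thus the only remaining point for essential surjectivity is closure of the essential image under direct factors, which, by full faithfulness, is equivalent to the assertion that $\H^0(\Pr(S,R))$ is idempotent complete.

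This is where the main work lies. Up to isomorphism in $\H^0(\Pr(S,R))$ every object may be replaced by a strictly perfect complex of right $S$-modules, which is a compact object of $\textup{D}(S)$; condition \ref{cond2} is unaffected since $-\otimes_S(S/R)$ preserves homotopy equivalences. Because $\textup{D}(S)$ has arbitrary coproducts, idempotents split there, and a direct factor of a compact object is again compact, so $\per(S)$ is idempotent complete (the ring-level counterpart of Remark \ref{repDG}; cf. \cite{NeeCompObj}). Given an idempotent $e=e^2$ on $X\in\H^0(\Pr(S,R))$, I would therefore split it inside the perfect derived category $\per(S)$ as $X\simeq Y\oplus Y'$ with $Y,Y'$ strictly perfect, so condition \ref{cond1} holds for each summand. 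The crucial verification is that condition \ref{cond2} also descends to summands: additivity of $-\otimes_S(S/R)$ gives $X/XR\simeq Y/YR\oplus Y'/Y'R$, and since $X/XR$ is acyclic and a direct summand of an acyclic complex is acyclic (cohomology being additive), both $Y/YR$ and $Y'/Y'R$ are acyclic. Hence $Y,Y'\in\Pr(S,R)$, and the splitting of $e$ takes place inside the full subcategory $\H^0(\Pr(S,R))$.

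Putting these together, the essential image of $h$ is a triangulated subcategory of $\per(\cD)$ containing the representables and closed under translations, cones and direct factors, so it is all of $\per(\cD)$; combined with full faithfulness this yields the desired triangulated equivalence. The hard part will be the idempotent completeness of $\H^0(\Pr(S,R))$ — more precisely, checking that a splitting carried out in the ambient $\per(S)$ produces summands that still satisfy the ideal-theoretic condition \ref{cond2}. The observation that direct summands of acyclic complexes remain acyclic is exactly what makes this step go through.
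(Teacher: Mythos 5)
Your proposal is correct, and its skeleton coincides with the paper's proof: both reduce the statement to full faithfulness plus essential surjectivity, observe that the image of $h$ is closed under translations and mapping cones (so it is cofinal in $\per(\Pr(S,R))$), and identify the real content as idempotent completeness of $\H^0(\Pr(S,R))$, verified by splitting an idempotent in a larger ambient category and checking that conditions \ref{cond1} and \ref{cond2} descend to the summands. The difference lies in where the splitting is performed. The paper splits $p$ in the homotopy category of all unbounded cochain complexes of right $S$-modules (idempotents split there because that category has countable coproducts), and then verifies condition \ref{cond1} by Quillen's Proposition 1.1: $pX$ is a homotopy retract of $X$, hence of a strictly perfect complex, hence homotopy equivalent to one. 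You instead split $p$ in $\textup{D}(S)$, using compactness and idempotent completeness of $\per(S)$. That works, but it hides a step your write-up does not address: the splitting you obtain is a priori an isomorphism $X\cong Y\oplus Y'$ in the derived category, i.e.\ only up to quasi-isomorphism, whereas to split $p$ inside $\H^0(\Pr(S,R))$ you need the structure maps to be homotopy classes of genuine chain maps and the decomposition to be a homotopy equivalence. This is repaired by noting that $X$ and the strictly perfect representatives of $Y, Y'$ are all K-projective (being homotopy equivalent to bounded complexes of finitely generated projectives), so derived-category morphisms and isomorphisms between them are homotopy classes of chain maps and homotopy equivalences; the paper's choice of ambient category, the homotopy category rather than the derived category, avoids this issue entirely, which is what Quillen's retract characterization buys. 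Your verification of condition \ref{cond2} for the summands (additivity of $-\otimes_S(S/R)$ plus the fact that a direct summand of an acyclic complex is acyclic) matches the paper's argument, phrased via the equivalent formulation $pX\otimes_S R\simeq pX$. Also, your full faithfulness argument via the Yoneda embedding into $\textup{D}(\Pr(S,R))$ is a legitimate shortcut; the paper instead checks the Thomason--Trobaugh criterion that $h$ preserves and reflects weak equivalences, which amounts to the same Yoneda-type computation.
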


\begin{proof}
Since $h:\Z^0(\Pr(S,R))\map\Z^0(\per_{dg}(\Pr(S,R)))$ is a fully faithful exact functor between permissible Waldhausen categories, by Section 1.9.7 of \cite{ThoTro}, we need to check:

\begin{enumerate} 
\item \label{check1} any morphism $f$ in $\Z^0(\Pr(S,R))$ is a homotopy equivalence if and only if the morphism $h(f)$ in $\Z^0(\per_{dg}(\Pr(S,R)))$ is a quasi-isomorphism, and
\item \label{check2} the induced $h:\H^0(\Pr(S,R))\map\per(\Pr(S,R))$ is essentially surjective.
\end{enumerate}

\eqref{check1} Obviously $X\simeq Y$ implies that $h(X)=\Hom(-,X)\simeq_{qi} \Hom(-,Y)=h(Y)$. Conversely, suppose $\Hom(-,X)\simeq_{qi} \Hom(-,Y)$, i.e., there is a map $f:X\map Y$ that induces a quasi-isomorphism $\Hom(P,X)\simeq_{qi}\Hom(P,Y)$ for all $P\in\Z^0(\Pr(S,R))$. Putting $P=Y$, we obtain an isomorphism $\H^0(\Hom(Y,X))\overset{\H^0(f)}{\map}\H^0(\Hom(Y,Y))$. Let the class $[g]$ be the preimage of $[id]\in\H^0(\Hom(Y,Y))$ under this isomorphism. Then $g$ is a right homotopy inverse of $f$ and, similarly, putting $P=X$ one obtains a left homotopy inverse of $f$.

\eqref{check2} The objects of $\per(\Pr(S,R))$ are by definition those obtained by translations, formation of mapping cones and passage to direct summands of the image $h(\H^0(\Pr(S,R)))$ in the derived category $\textup{D}(\Pr(S,R))$. Clearly $h$ commutes with translations (in both directions) and formations of mapping cones. This implies that the image of the induced map $h:\H^0(\Pr(S,R))\map\per(\Pr(S,R))$ is cofinal in the sense that, given any $Y\in\per(\Pr(S,R))$ there is an $X\in\H^0(\Pr(S,R))$ such that $Y$ is a direct summand of $h(X)$. Now let us verify that the category $\H^0(\Pr(S,R))$ is idempotent complete. Given any arrow $p:X\map X$ in $\H^0(\Pr(S,R))$, such that $p^2=p$, one can write $X\simeq pX\oplus (\id-p)X$ in the larger homotopy category of all unbounded cochain complexes of right modules over $S$. We need to check the $pX$ is an object of $\H^0(\Pr(S,R))$. By Proposition 1.1. of \cite{QuiNonunitalK0} a complex is homotopy equivalent to a strictly perfect complex if and only if it is a homotopy retract of a strictly perfect complex. It is clear that $pX$ is a homotopy retract of $X$, which is itself a homotopy retract of a strictly perfect complex. The condition $pX\otimes_S R\cong pX$ follows easily from the decomposition $X\simeq pX\oplus (\id-p)X$ and the fact that $-\otimes_S R$ is an additive functor.
\end{proof}

\begin{lem} \label{comp}
For all $R\in\kAlg$, the exact functor $h:\Z^0(\Pr(R))\map\Z^0(\per_{dg}(\Pr(R)))$ induces a natural weak equivalence of spectra $\bK'(R)\overset{\sim}{\map}\kQ(R)$. Furthermore, $\pi_0(\kQ(R))\cong\pi_0(\bK'(R))=\K'_0(R)$, where $\K'_0$ on the right hand side denotes Quillen's original definition of nonunital $\K_0$ of a ring.
\end{lem}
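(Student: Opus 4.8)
The plan is to realize both spectra as Waldhausen $\K$-theories of permissible Waldhausen categories linked by the exact functor $h$, and then to invoke the invariance of Waldhausen $\K$-theory under an exact functor that induces an equivalence of triangulated homotopy categories. By construction $\bK'(R)=\bK^{w}(\Z^0(\Pr(R)))$ and $\kQ(R)=\bK_{dg}(\Pr(R))=\bK^{w}(\Z^0(\per_{dg}(\Pr(R))))$, and the preceding lemma shows that $h$ is Waldhausen exact between them. Both categories are permissible, hence complicial biWaldhausen in the sense of \cite{ThoTro}, and both carry the usual functorial mapping cylinder of cochain complexes, which stays inside them because the two conditions defining $\Pr(R)$ are invariant under homotopy equivalence.

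First I would record that Lemma \ref{MorFib} already supplies the crucial input: criterion \eqref{check1} says that $h$ reflects weak equivalences (a morphism $f$ is a homotopy equivalence iff $h(f)$ is a quasi-isomorphism), and together with \eqref{check2} it gives that the induced functor $h:\H^0(\Pr(R))\map\per(\Pr(R))$ is a triangle equivalence. Granting this, the first assertion follows from Theorem 1.9.8 of \cite{ThoTro} (equivalently, from Waldhausen's Approximation Theorem): an exact functor of complicial biWaldhausen categories that induces an equivalence of homotopy categories induces a weak equivalence on $\K$-theory spectra. Hence $h$ induces $\bK'(R)\overset{\sim}{\map}\kQ(R)$. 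Naturality in $R$ is automatic, since $R\mapsto\Pr(R)$, the Yoneda embedding $h$, and the passage to $\per_{dg}$ are all functorial.

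For the statement on $\pi_0$, recall that for any small Waldhausen category the group $\pi_0$ of its $\K$-theory spectrum is the Grothendieck group generated by the weak equivalence classes of objects, modulo the relation $[Y]=[X]+[Z]$ for every cofiber sequence $X\rightarrowtail Y\twoheadrightarrow Z$. Applied to $\Z^0(\Pr(R))$, whose weak equivalences are the homotopy equivalences and whose cofiber sequences are exactly the graded split short exact sequences of complexes, this presentation coincides verbatim with Quillen's definition of $\K'_0(R)$ from \S 4 of \cite{QuiNonunitalK0}. Thus $\pi_0(\bK'(R))=\K'_0(R)$, and combining this with the weak equivalence of the previous paragraph yields $\pi_0(\kQ(R))\cong\pi_0(\bK'(R))=\K'_0(R)$.

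The hard part, if one runs the Approximation Theorem by hand rather than quoting \cite{ThoTro}, is the second approximation hypothesis: given $X\in\Z^0(\Pr(R))$ and a map $g:h(X)\map P$ in $\Z^0(\per_{dg}(\Pr(R)))$, one must produce an honest cofibration $X\rightarrowtail X'$ in $\Z^0(\Pr(R))$ and a weak equivalence $h(X')\overset{\sim}{\map}P$ whose composite with $h$ is exactly $g$. Essential surjectivity of $h$ on homotopy categories furnishes an $X'$ with $h(X')\simeq_{qi}P$, full faithfulness lets one lift $g$ to a genuine map $X\map X'$, and factoring this map through its mapping cylinder produces the required cofibration; arranging the composite to equal $g$ on the nose (not merely up to homotopy) is the delicate bookkeeping that this step hides, and is precisely what the invariance result 1.9.8 packages away.
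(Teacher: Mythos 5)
Your proposal is correct and takes essentially the same route as the paper: both proofs feed the triangle equivalence $\H^0(\Pr(R))\overset{\sim}{\map}\per(\Pr(R))$ from Lemma \ref{MorFib} into a standard invariance theorem for Waldhausen $\K$-theory of permissible (complicial biWaldhausen) categories, the only difference being that the paper quotes Grayson's cofinality theorem as stated in Theorem 2.4 of \cite{NeeRan} where you quote Theorem 1.9.8 of \cite{ThoTro} (the approximation theorem). The identification of $\pi_0$ with Quillen's $\K'_0(R)$ via the presentation of $\K_0$ of a Waldhausen category is the same in both.
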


\begin{proof}
Since the canonical induced map $\H^0(\Pr(R))\map\per(\Pr(R))$ is an exact equivalence of triangulated categories (see Lemma \ref{MorFib} above), by Grayson's Cofinality Theorem as stated in Theorem 2.4 of \cite{NeeRan} the induced map of spectra $\bK'(R)\map\kQ(R)$ produces an isomorphism of homotopy groups $\pi_i$ for all $i\geqslant 0$, i.e., it is a weak equivalence of spectra. The naturality of this weak equivalence is clear.

The second assertion follows easily from the description of the cofibrations and the weak equivalences in the Waldhausen category structure on $\Z^0(\Pr(R))$ and the general description of the $\K_0$-group of any Waldhausen category.
\end{proof}

\noindent
Now we can set $\K'_i(R)=\pi_i(\bK'(R))$ and $\KQ_i(R)=\pi_i(\kQ(R))$ for all $i\geqslant 0$; in addition, we know that $\KQ_0(R)\cong\K'_0(R)$ in agreement with Quillen's original definition. Each element in $\K'_0(R)$ can be described by the class of a two-term complex $f:P^0\map P^1$, where $P^i$ are finitely generated and projective right $\tilde{R}$-modules and $f$ is an $\tilde{R}$-module homomorphism, which induces an isomorphism modulo $R$, $\overline{f}:P^0/P^0R\map P^1/P^1R$. There is a surjective group homomorphism $\nu_0:\K'_0(R)\map\K^\alg_0(R)$, sending the class $[f:P^0\map P^1]\in\K'_0(R)$ to the class $[P^1]-[P^0]\in\K^\alg_0(R):=\ker[\K_0^\alg(\tilde{R})\map\K_0^\alg(k)]$ (see \S 6 of \cite{QuiNonunitalK0}).

Let $\IAlg$ denote the full subcategory of $\kAlg$, consisting of {\it idempotent} $k$-algebras $R$, i.e., $R$ satisfying $R^2=R$, where $R^2=\{\sum_{i=1}^n r_ir'_i\,|\, r_i, r'_i\in R\}$. Any unital $k$-algebra is evidently an idempotent algebra. Any $C^*$-algebra is also an idempotent algebra, which follows from the Cohen--Hewitt factorization Theorem (see Theorem 2.5 of \cite{Hewitt}). It is clear that, if $R$ is a unital $k$-algebra then, for any $R'\in\IAlg$, one finds $R\otimes_k R'\in\IAlg$. Consequently, for any $R\in\IAlg$, $\MM_n(R)=\MM_n(k)\otimes_k R$ is an idempotent $k$-algebra.

\begin{lem} \label{HomotopyFibre}
Let $0\map R\map S\map T\map 0$ be an exact sequence in $\IAlg$. Then, for any $X\in\Pr(S)$, the complex $X\otimes_{\tilde{S}}\tilde{T}$ is homotopy equivalent to $0$ in $\Pr(T)$ if and only if $X\in\Pr(\tilde{S},R)$.
\end{lem}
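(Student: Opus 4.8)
The plan is to collapse the two-sided biimplication into a single acyclicity condition by recognizing the base-change functor $-\otimes_{\tilde{S}}\tilde{T}$ as reduction modulo $R$. First I would unwind the unitizations: the surjection $S\map T$ with kernel $R$ extends to $\tilde{g}:\tilde{S}\map\tilde{T}$, $(s,\lambda)\mapsto(g(s),\lambda)$, whose kernel is exactly $R$ (viewed as $R\oplus 0\subset\tilde{S}$). Moreover $R$ is a two-sided ideal of $\tilde{S}$, since for $r\in R$ and $(s,\lambda)\in\tilde{S}$ one has $r(s,\lambda)=rs+\lambda r\in R$ because $R$ is an ideal of $S$, and symmetrically on the left. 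Hence $\tilde{T}\cong\tilde{S}/R$ as $\tilde{S}$-$\tilde{T}$-bimodules, and therefore $X\otimes_{\tilde{S}}\tilde{T}\cong X\otimes_{\tilde{S}}(\tilde{S}/R)=X/XR$ as complexes of right $\tilde{T}$-modules. This identification, that $g_*$ is nothing but the quotient $X\mapsto X/XR$, is the crux of the argument.

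Next I would upgrade ``acyclic'' to ``contractible''. Since $X\in\Pr(S)$, condition \eqref{cond1} supplies a strictly perfect complex $P$ of right $\tilde{S}$-modules with $X\simeq P$; because $g_*=-\otimes_{\tilde{S}}\tilde{T}$ preserves cochain homotopy equivalences (the very fact already used to see that $g_*$ lands in $\Pr(T)$), we obtain $X/XR\simeq P/PR$, where $P/PR=P\otimes_{\tilde{S}}\tilde{T}$ is a strictly perfect complex of right $\tilde{T}$-modules. A bounded acyclic complex of projective modules is contractible, so $P/PR$, and with it $X\otimes_{\tilde{S}}\tilde{T}$, is homotopy equivalent to $0$ in $\Pr(T)$ if and only if it is acyclic, that is, if and only if $X/XR$ is acyclic.

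Finally I would match this against the definition of $\Pr(\tilde{S},R)$. Condition \eqref{cond1} for membership in $\Pr(\tilde{S},R)$ is word-for-word the same as condition \eqref{cond1} for $\Pr(S)=\Pr(\tilde{S},S)$, since both demand homotopy equivalence to a strictly perfect complex of right $\tilde{S}$-modules; it is therefore automatic from the hypothesis $X\in\Pr(S)$. The only remaining requirement for $X\in\Pr(\tilde{S},R)$ is condition \eqref{cond2}, namely that $X/XR$ be acyclic. Combining this with the previous paragraph gives $X\otimes_{\tilde{S}}\tilde{T}\simeq 0$ in $\Pr(T)$ if and only if $X/XR$ is acyclic, if and only if $X\in\Pr(\tilde{S},R)$, which is the claim. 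I do not expect a serious obstacle: the entire content is the identification $X\otimes_{\tilde{S}}\tilde{T}\cong X/XR$ together with the observation that $\Pr(\tilde{S},S)$ and $\Pr(\tilde{S},R)$ share condition \eqref{cond1} and differ only in the ideal appearing in condition \eqref{cond2}. The one point demanding a little care is the passage from acyclicity to contractibility, which must be routed through the strictly perfect representative furnished by condition \eqref{cond1} rather than applied to $X$ itself.
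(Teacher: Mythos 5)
Your proof is correct, and for the backward implication it is essentially the paper's own argument: the paper likewise reduces to a strictly perfect representative, identifies $X\otimes_{\tilde{S}}\tilde{T}$ with $X/XR$ (though it does so via the bimodule exact sequence $0\map R\map \tilde{S}\map \tilde{T}\map 0$ and degreewise flatness of the strictly perfect complex, rather than your cleaner observation that $\tilde{T}\cong\tilde{S}/R$ makes the identification valid for arbitrary $X$ by right-exactness of the tensor product), and then invokes Proposition 2.2 of Quillen, i.e.\ exactly the equivalence of the two forms of condition \eqref{cond2} that you use. Where you genuinely diverge is in the forward implication and in the uniformity of the treatment. The paper proves the forward direction by a different mechanism: starting from the homotopy-equivalence form $X\otimes_{\tilde{S}}R\simeq X$ of condition \eqref{cond2}, it shows that $R\otimes_{\tilde{S}}\tilde{T}=0$, so that $X\otimes_{\tilde{S}}\tilde{T}\simeq X\otimes_{\tilde{S}}R\otimes_{\tilde{S}}\tilde{T}=0$ on the nose; this is precisely where the standing hypothesis $R^2=R$ is consumed, since in your language $R\otimes_{\tilde{S}}\tilde{T}\cong R/R^2$. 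You instead run both directions through the single isomorphism $X\otimes_{\tilde{S}}\tilde{T}\cong X/XR$ together with the acyclic-implies-contractible upgrade routed through the strictly perfect representative. The trade-off: your argument never uses idempotency of $R$, so it actually establishes the lemma for an arbitrary extension of $k$-algebras (granting Quillen's Proposition 2.2, which holds for arbitrary two-sided ideals), and it makes the biconditional collapse into one equivalence chain; the paper's forward argument, by contrast, exhibits the contraction directly without the contractibility upgrade, and makes visible exactly where the hypothesis $R\in\IAlg$ enters the picture. Both proofs lean equally on Quillen's Proposition 2.2, which the paper has written into the statement of condition \eqref{cond2}, so your appeal to the acyclicity form of that condition is legitimate rather than circular.
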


\begin{proof}
For any $X\in\Pr(\tilde{S},R)$, let us show that $X\otimes_{\tilde{S}}\tilde{T}$ is homotopy equivalent to $0$ in $\Pr(T)$. We know that $X\otimes_{\tilde{S}} R\simeq X$, whence $X\otimes_{\tilde{S}}R\otimes_{\tilde{S}}\tilde{T}\simeq X\otimes_{\tilde{S}}\tilde{T}$. Now any element $r\otimes t\in R\otimes_{\tilde{S}}\tilde{T}$  can be written as $\sum_{i=1}^n r'_ir''_i\otimes t$, since $R^2=R$. Each $r'_ir''_i\otimes t= r'_i\otimes r''_it = 0$, i.e., $R\otimes_{\tilde{S}}\tilde{T}$ is trivial as an $\tilde{S}-\tilde{T}$-bimodule. This shows that $X\otimes_{\tilde{S}}\tilde{T}\simeq 0$. 

\noindent
Conversely, consider the short exact sequence of $\tilde{S}-\tilde{S}$-bimodules

\beqn
0\map R\map {\tilde{S}}\map \tilde{T}\map 0.
\eeqn Without loss of generality assume that $X\in \Pr(S)$ is a strictly perfect complex, which gives rise to the following short exact sequence of complexes of right $\tilde{S}$-modules

\beqn
0\map X\otimes_{\tilde{S}} R\map X\otimes_{\tilde{S}}\tilde{S}\cong X\map X\otimes_{\tilde{S}}\tilde{T}\map 0
\eeqn Since $X$ is a strictly perfect complex, $XR\cong X\otimes_{\tilde{S}} R$ (isomorphic as complexes of right $\tilde{S}$-modules), whence $X/XR\cong X\otimes_{\tilde{S}}\tilde{T}$ is acyclic. By Proposition 2.2 of \cite{QuiNonunitalK0} the map $X\otimes_{\tilde{S}} R\map X$ must be a homotopy equivalence, i.e., $X\in\Pr(\tilde{S},R)$.
\end{proof}

\noindent
A sequence of DG functors $\cA\overset{i}{\map}\cB\overset{j}{\map}\cC$ such that $ji=0$ in $\NCS$ is called {\it exact}, if $j$ is the cokernel of $i$ and $i$ is the kernel of $j$.

\begin{lem} \label{DGExact} \footnote{see also Erratum available at the author's homepage}
Let $0\map R\overset{\psi}{\map} S\overset{\phi}{\map} T\map 0$ be any exact sequence in $\kAlg$ such that $R\in\IAlg$ and $S,T$ are unital. Then the induced diagram $\Pr(R)\map\Pr(S)\map\Pr(T)$ is an exact sequence.
\end{lem}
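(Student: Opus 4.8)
The plan is to check separately the three conditions packaged in the definition of an exact sequence: that $\phi_*\psi_*=0$ in $\NCS$, that $\psi_*$ is the kernel of $\phi_*$, and that $\phi_*$ is the cokernel of $\psi_*$, where $\psi_*=-\otimes_{\tilde{R}}\tilde{S}$ and $\phi_*=-\otimes_{\tilde{S}}\tilde{T}$ are the base change functors $\Pr(\psi)$ and $\Pr(\phi)$. The governing idea is to transport each statement to the perfect derived categories, where, thanks to Lemma \ref{MorFib}, every perfect module is represented by an honest object (so that $\per(\Pr(-))\cong\H^0(\Pr(-))$), and where the whole sequence becomes the assertion that
\[
\per(\Pr(S,R))\map\per(\Pr(S))\map\per(\Pr(T))
\]
is a short exact (localization) sequence of triangulated categories. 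This transport is legitimate because a morphism in $\NCS=\Hmo$ is an isomorphism class in $\pi_0(\rep_{dg}(-,?))$, hence is represented by a bimodule and induces an exact functor between the categories $\per$, and a morphism vanishes in $\NCS$ exactly when the induced functor annihilates every object.

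First I would dispose of $\phi_*\psi_*=0$. The composite functor is $-\otimes_{\tilde{R}}\tilde{T}$, where $\tilde{T}$ is a left $\tilde{R}$-module through the unitization of the zero map $R\map T$. For $X\in\Pr(R)$ one has $X\otimes_{\tilde{R}}R\simeq X$, hence $X\otimes_{\tilde{R}}\tilde{T}\simeq X\otimes_{\tilde{R}}R\otimes_{\tilde{R}}\tilde{T}$; writing any $r\otimes t$ as $\sum_i r_i'r_i''\otimes t=\sum_i r_i'\otimes r_i''t$ via $R^2=R$ and noting that each $r_i''\in R=\ker\phi$ acts as $0$ on $\tilde{T}$ gives $R\otimes_{\tilde{R}}\tilde{T}=0$, so $X\otimes_{\tilde{R}}\tilde{T}\simeq 0$ for all $X$. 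This is precisely where the hypothesis $R\in\IAlg$ enters, and it is the computation underlying the forward implication of Lemma \ref{HomotopyFibre}.

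Next I would settle the kernel statement. Since $S$ is unital, $\Pr(S,R)$ (equivalently $\Pr(\tilde{S},R)$) is the full subcategory of $\Pr(S)$ carved out by the extra condition $X\otimes_{\tilde{S}}R\simeq X$, and Lemma \ref{RSR}, applied to the given $0\map R\map S\map T\map 0$ via the Morita context $\left(\begin{smallmatrix}\tilde{R} & S\\ R & S\end{smallmatrix}\right)$, yields $\Pr(R)\cong\Pr(S,R)$ in $\NCS$. One checks that under this isomorphism $\psi_*$ becomes the inclusion $\Pr(S,R)\hookrightarrow\Pr(S)$: the natural map $X\otimes_{\tilde{R}}S\map X\otimes_{\tilde{R}}\tilde{S}$ has cokernel $X\otimes_{\tilde{R}}k\cong X/XR$, which is acyclic for $X\in\Pr(R)$ by condition \eqref{cond2}. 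Combining Lemma \ref{HomotopyFibre} with $\per(\Pr(S))\cong\H^0(\Pr(S))$ from Lemma \ref{MorFib}, the objects of $\per(\Pr(S))$ annihilated by $\phi_*$ are exactly those of $\Pr(S,R)$; thus $\per(\Pr(S,R))=\ker(\phi_*)$ as a thick subcategory (it is idempotent complete, Remark \ref{repDG}). The universal property of the kernel in $\NCS$ is then immediate: any $f\colon\mathcal{E}\map\Pr(S)$ with $\phi_* f=0$ induces an exact functor with image in $\ker(\phi_*)=\per(\Pr(S,R))\cong\per(\Pr(R))$, and this produces the unique factorization through $\psi_*$.

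The cokernel statement is the substantive part and the main obstacle. It amounts to showing that $\phi_*$ exhibits $\per(\Pr(T))$ as the Verdier quotient $\per(\Pr(S))/\per(\Pr(S,R))$. Essential surjectivity up to summands is easy, since $\phi_*$ is exact and carries the free generator of $\per(\Pr(S))$ to that of $\per(\Pr(T))$; the difficulty is full faithfulness of the induced functor $\overline{\phi_*}\colon\per(\Pr(S))/\per(\Pr(S,R))\map\per(\Pr(T))$. I would prove this by the Thomason--Neeman localization machinery: realize $-\otimes_S T\colon\textup{D}(S)\map\textup{D}(T)$ as a Bousfield localization whose fully faithful right adjoint is restriction along $S\twoheadrightarrow T$, identify its kernel as the localizing subcategory generated by $\Pr(S,R)$, and descend the resulting equivalence to compact objects. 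The two delicate points, on which the argument genuinely turns, are (i) that the kernel of $-\otimes_S T$ is generated by the relative category $\Pr(S,R)$ and is no larger, and (ii) the idempotent-completion bookkeeping, so that $\overline{\phi_*}$ is an equivalence only after passing to direct factors. Point (ii) is harmless in $\NCS=\Hmo$, where idempotent completion is a Morita equivalence, so $\Pr(T)$ and the DG quotient of $\Pr(S)$ by $\Pr(S,R)$ are identified; but both points require care (cf. the author's erratum). Once $\overline{\phi_*}$ is an equivalence, the universal property of the Verdier quotient gives, for any $g\colon\Pr(S)\map\mathcal{E}$ with $g\psi_*=0$, a unique $h\colon\Pr(T)\map\mathcal{E}$ with $g=h\phi_*$, establishing $\phi_*=\coker(\psi_*)$ and completing the proof.
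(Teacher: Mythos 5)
Your reduction of exactness in $\NCS$ to a statement about triangulated categories, together with your treatment of the first two conditions, follows the paper quite closely: the computation $R\otimes_{\tilde{R}}\tilde{T}=0$ from $R^2=R$, and the identification of $\H^0(\Pr(\tilde{S},R))\cong\per(\Pr(S,R))$ with $\ker(\phi_*)$ via Lemmas \ref{RSR}, \ref{MorFib} and \ref{HomotopyFibre}, are exactly the paper's steps; the paper packages the kernel/cokernel universal properties once and for all through Keller's criterion (Theorem 4.11 of \cite{KelDG}), so like you it only has to produce a short exact sequence of triangulated categories up to factors. Your essential-surjectivity argument via the free generator is a harmless (in fact cleaner) variant of the paper's inductive lifting of bounded complexes of free modules.

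The gap is in the cokernel step, and it is fatal to the route you propose. You want to realize $-\otimes^{\mathbb{L}}_{S}T:\textup{D}(S)\map\textup{D}(T)$ as a Bousfield localization whose fully faithful right adjoint is restriction of scalars, and then descend to compact objects by Neeman--Thomason. But full faithfulness of restriction is equivalent to the counit $T\otimes^{\mathbb{L}}_{S}T\map T$ being an isomorphism, i.e., to $\Tor^S_i(T,T)=0$ for all $i\geqslant 1$; feeding $0\map R\map S\map T\map 0$ into the long exact sequence, this amounts to $\Tor^S_i(R,T)=0$ for all $i\geqslant 0$. Idempotence buys you only the case $i=0$, namely $R\otimes_S T=R/R^2=0$; the vanishing of the higher $\Tor$'s is precisely the condition that $S\map T$ be a homological epimorphism, which is the Tor-unitality/$\H$-unitality-type hypothesis of Suslin--Wodzicki \cite{SusWod2} governing excision --- strictly stronger than $R\in\IAlg$, and exactly the hypothesis this lemma is trying to avoid. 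Your point (i), compact generation of $\ker(-\otimes^{\mathbb{L}}_S T)$ by $\Pr(S,R)$, is a second unverified hypothesis of the same machine. You flag both as ``requiring care,'' but they are the entire substance of the statement, so the proposal is not a proof. The paper proceeds differently at this point: having invoked Keller's criterion, it verifies directly that the induced functor $\H^0(\Pr(S))/\H^0(\Pr(S,R))\map\H^0(\Pr(T))$ is faithful (a mapping-cone/image argument inside $K^b(\tilde{S})$) and full (an inductive lifting of chain maps between bounded complexes of finitely generated free modules along $\tilde{S}\map\tilde{T}$, using a $k$-linear splitting of the surjections on matrices), never passing through a localization of the big derived categories. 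You did locate the genuine difficulty correctly --- this full-faithfulness step is what the author's erratum concerns --- but the localization machinery you substitute for it cannot be run under the stated hypotheses.
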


\begin{proof}
Using Lemma \ref{RSR} one obtains a diagram $\Pr(R)\cong\Pr(\tilde{S},R)\overset{\psi}{\map}\Pr(S)\overset{\phi}{\map}\Pr(T)$ in $\NCS$, where $\psi=-\otimes_{\tilde{R}}\tilde{S}$ and $\phi=-\otimes_{\tilde{S}}\tilde{T}$. It follows from $R^2=R$ that $\phi\circ\psi \simeq 0$, since $R\otimes_{\tilde{R}}\tilde{S}\otimes_{\tilde{S}}\tilde{T}\cong R\otimes_{\tilde{R}}\tilde{T}\cong 0$. In order to show that $\Pr(\tilde{S},R)\map\Pr(S)\map\Pr(T)$ is an exact sequence, we need to show that $\H^0(\Pr(\tilde{S},R))$ is a thick subcategory of $\H^0(\Pr(S))$ and the induced map $\H^0(\Pr(S))/\H^0(\Pr(\tilde{S},R))\overset{\phi}{\map}\H^0(\Pr(T))$ after Verdier localization is essentially surjective up to factors (see Theorem 4.11 of \cite{KelDG}). Using Lemma \ref{MorFib} and Lemma \ref{HomotopyFibre} we find that $\H^0(\Pr(\tilde{S},R))$ is the kernel of the map $\phi$ and hence thick. The induced functor $\H^0(\Pr(S))/\H^0(\Pr(S,R))\overset{\phi}{\map} \H^0(\Pr(T))$ is exact. 

Let us now show that the induced functor $\H^0(\Pr(S))/\H^0(\Pr(S,R))\overset{\phi}{\map} \H^0(\Pr(T))$ is fully faithful. Choose any $g\in\Hom_{\Pr(S)}(X,Y)$ such that $\phi(g)=0$. In the abelian category of bounded cochain complexes of all right $\tilde{S}$-modules one has the following commutative diagram

\beqn
\xymatrix{
X\ar[r]^g \ar[d] &\textup{Im}(g) \ar[d] \\
X\ar[r]^g  & Y,
}
\eeqn which gives rise to a morphism of distinguished triangles in the bounded homotopy category $K^b(\tilde{S})$ of cochain complexes

\beqn
\xymatrix{
Z[-1]\ar[r]^\alpha\ar[d] &X\ar[r]^g\ar[d] &\textup{Im}(g)\ar[r]\ar[d] &Z\ar[d] \\
Z'[-1]\ar[r] & X\ar[r]^g & Y\ar[r] & Z'
}
\eeqn The functor $\phi$ naturally extends to an exact functor $\bar{\phi}:=-\otimes^\mathbb{L}_{\tilde{S}}\tilde{T}:K^b(\tilde{S})\map K^b(\tilde{T})$ and $\bar{\phi}(\Im(g))\simeq 0$. Since $\Cone(\alpha)= \textup{Im}(g)\in\ker(\bar{\phi})$, $\alpha$ is invertible in the Verdier localization $K^b(\tilde{S})/\ker(\bar{\phi})$. Now $g\circ\alpha = 0$ implies that $g$ factors through $0$ in $K^b(\tilde{S})/\ker(\bar{\phi})$. Since $\H^0(\Pr(S))/\ker(\phi)$ is a subcategory of $K^b(\tilde{S})/\ker(\bar{\phi})$, one concludes that $g=0$ in $\H^0(\Pr(S))/\ker(\phi)$ whence the induced functor $\phi$ is faithful.

Now we need to prove that the induced map $\Hom(X,Y)\map\Hom(X\otimes_{\tilde{S}}\tilde{T},Y\otimes_{\tilde{S}}\tilde{T})$ is surjective. Without loss of generality, let us assume that $X,Y$ are strictly perfect complexes. Since any strictly perfect complex is a homotopy retract of a bounded complex of finitely generated and free right modules (see Proposition 1.1 of \cite{QuiNonunitalK0} and also \cite{Ranicki}), we may further assume that $X,Y$ consist of finitely generated and free right $\tilde{S}$-modules in each degree. Choose any $\tilde{T}$-linear map of cochain complexes $f:X\otimes_{\tilde{S}}\tilde{T}\map Y\otimes_{\tilde{S}}\tilde{T}$ and consider the diagram in the abelian category of cochain complexes over $\tilde{S}$

\beqn
\xymatrix{
X\otimes_{\tilde{S}}\tilde{S}\cong X\ar[d] & Y\cong Y\otimes_{\tilde{S}}\tilde{S}\ar[d]\\
X\otimes_{\tilde{S}}\tilde{T}\ar[r]^f & Y\otimes_{\tilde{S}}\tilde{T},
}
\eeqn where $f$ is viewed as a morphism of complexes over $\tilde{S}$ via the restriction of scalars. We need to construct a map $\bar{f}:X\map Y$ of complexes over $\tilde{S}$, making the above diagram commute. This can be done by induction on the length of $X$, using the fact that the surjection $M_{n\times m}(\tilde{S})\map M_{n\times m}(\tilde{T})$ induced by $\tilde{\phi}:\tilde{S}\map\tilde{T}$ admits a $k$-linear splitting (since $k$ is a field).

It remains to show that $\phi$ is essentially surjective up to factors, i.e., given any $Y\in\H^0(\Pr(T))$, there is an $X\in\H^0(\Pr(S))$, such that $Y$ is a direct summand of $\phi(X)$. We may assume that $Y\in\H^0(\Pr(T))$ is a strictly perfect complex. Then, as noted above, $Y$ is a homotopy retract of a complex $F$, where $F$ is a bounded complex of finitely generated and free right $\tilde{T}$-modules. It follows that $Y\otimes_{\tilde{T}} T\simeq Y$ is a direct summand of $F\otimes_{\tilde{T}} T$ in the idempotent complete category $\H^0(\Pr(T))$. One can verify by induction on the number of nonzero terms of $F$ that there is a bounded complex $F'$ of finitely generated and free right $\tilde{S}$-modules, such that $F'\otimes_{\tilde{S}}\tilde{T} \simeq F$. Since $T$ and $S$ are unital, they are finitely generated and projective $\tilde{T}$ and $\tilde{S}$ bimodules respectively, from which we get $F\simeq F\otimes_{\tilde{T}} T\in\Pr(T)$ and $F'\simeq F'\otimes_{\tilde{S}} S\in\Pr(S)$. Finally using $S\otimes_{\tilde{S}}\tilde{T}\cong T$, we obtain

\beqn
\phi(F'\otimes_{\tilde{S}} S)= F'\otimes_{\tilde{S}} S\otimes_{\tilde{S}}\tilde{T} \simeq F'\otimes_{\tilde{S}} T\simeq F'\otimes_{\tilde{S}} \tilde{T}\otimes_{\tilde{T}} T \simeq F\otimes_{\tilde{T}} T.
\eeqn\end{proof}

\noindent
Let $F:\kAlg\functor\HoSpt$ be any functor. We call a $k$-algebra $R$ {\it connectively $F$-excisive} if, whenever $0\map R\map S\map T\map 0$ is an exact sequence in $\kAlg$ with $S$ and $T$ unital, the induced diagram $F(R)\map F(S)\map F(T)$ is a {\it connective homotopy fibration}, i.e., it gives rise to a long exact sequence of homotopy groups

\beqn
\cdots \map \pi_1(F(R))\map\pi_1(F(S))\map\pi_1(F(T))\map\pi_0(F(R))\map\pi_0(F(S))\map\pi_0(F(T)).
\eeqn As a consequence of Theorem 5.1 of \cite{KelDG} we draw the following conclusion. 

\begin{prop} \label{AlgExcision}
Any idempotent $k$-algebra is connectively $\kQ$-excisive.
\end{prop}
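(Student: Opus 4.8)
The plan is to deduce this connective excision statement directly from the exactness result already established in Lemma \ref{DGExact}, by feeding it into the localization theorem for $\K$-theory of DG categories. First I would fix an exact sequence $0\map R\map S\map T\map 0$ in $\kAlg$ with $R$ idempotent (i.e. $R\in\IAlg$) and $S,T$ unital. By Lemma \ref{DGExact}, the induced diagram of DG categories $\Pr(R)\map\Pr(S)\map\Pr(T)$ is an exact sequence in $\NCS$, meaning $\H^0(\Pr(\tilde{S},R))$ is a thick subcategory of $\H^0(\Pr(S))$ and the Verdier quotient embeds into $\H^0(\Pr(T))$ as an equivalence up to factors. The key is that such an exact sequence of DG categories is precisely the input required by Theorem 5.1 of \cite{KelDG}, which asserts that the $\K$-theory functor $\bK_{dg}$ sends an exact sequence of DG categories to a homotopy fibration of spectra.

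Concretely, I would invoke Theorem 5.1 of \cite{KelDG} to conclude that
\beqn
\bK_{dg}(\Pr(R))\map\bK_{dg}(\Pr(S))\map\bK_{dg}(\Pr(T))
\eeqn
is a homotopy fibration sequence. Unwinding Definition \ref{NKQ}, this is exactly the sequence $\kQ(R)\map\kQ(S)\map\kQ(T)$. A genuine homotopy fibration of spectra yields a full long exact sequence of homotopy groups in all degrees; truncating to the connective part gives the long exact sequence
\beqn
\cdots\map\pi_1(\kQ(R))\map\pi_1(\kQ(S))\map\pi_1(\kQ(T))\map\pi_0(\kQ(R))\map\pi_0(\kQ(S))\map\pi_0(\kQ(T))
\eeqn
demanded by the definition of connective $\kQ$-excision. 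Since the choice of exact sequence was arbitrary (subject only to $R\in\IAlg$ and $S,T$ unital), this establishes that every idempotent $k$-algebra is connectively $\kQ$-excisive.

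The main obstacle I anticipate is bookkeeping rather than conceptual: verifying that the notion of ``exact sequence of DG categories'' used in the hypothesis of Theorem 5.1 of \cite{KelDG} matches verbatim the notion established in Lemma \ref{DGExact}, including the passage between the Waldhausen $\K$-theory spectrum $\bK_{dg}$ of Definition \ref{NKQ} and whatever model of $\K$-theory Keller's localization theorem is phrased in. One should check that $\bK_{dg}$, being the Waldhausen $\K$-theory of $\Z^0(\per_{dg}(-))$ and known to factor through $\NCS$ (and $\DGcorr$), is the same invariant to which Keller's theorem applies; Lemma \ref{comp} and the surrounding discussion of permissible and saturated Waldhausen structures make this identification, so the remaining work is to confirm that the localization sequence of triangulated categories $\per(\Pr(\tilde{S},R))\map\per(\Pr(S))\map\per(\Pr(T))$ induced by Lemma \ref{DGExact} is the one that Theorem 5.1 turns into a fibration. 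Once that compatibility is in place, the conclusion is immediate, which is why the statement is phrased as a ``consequence'' of the cited theorem.
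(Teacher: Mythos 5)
Your proposal is correct in substance and follows exactly the paper's route: the paper obtains Proposition \ref{AlgExcision} as an immediate consequence of Lemma \ref{DGExact} together with the localization theorem for $\K$-theory of DG categories (Theorem 5.1 of \cite{KelDG}), which is precisely your argument, including the bookkeeping identification of $\bK_{dg}$ with the invariant to which Keller's theorem applies.

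One correction to the way you quote that theorem. For \emph{connective} $\K$-theory (which is what $\bK_{dg}$, hence $\kQ$, is), an exact sequence of DG categories does \emph{not} induce a genuine homotopy fibration of spectra; the localization theorem yields exactly the truncated long exact sequence terminating in $\pi_0(\kQ(S))\map\pi_0(\kQ(T))$, and this last map fails to be surjective in general (its cokernel is detected by nonconnective/negative $\K$-theory, equivalently by idempotent-completion defects in the quotient category). If the sequence were a genuine fibration of connective spectra, the long exact sequence would force that surjectivity, since $\pi_{-1}$ of a connective spectrum vanishes. This failure is precisely why the paper introduces the notion of ``connective homotopy fibration'' and why the proposition carries the adjective ``connectively.'' Since your argument only ever uses the truncated sequence, the proof stands once the theorem is quoted in its correct connective form; simply delete the intermediate claim that one has a genuine fibration which is then truncated.
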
 

\begin{rem}
We do not know whether the idempotence condition $R^2=R$ is necessary for a $k$-algebra $R$ to be connectively $\kQ$-excisive. 
\end{rem}

For any unital $k$-algebra $S$ let $\Perf(S)$ denote the $k$-linear category of {\it perfect} complexes of right $S$-modules with cochain maps. Recall that a complex of right $S$-modules is called {\it perfect} if it is quasi-isomorphic to a strictly perfect complex. It becomes a Waldhausen category with quasi-isomorphisms as weak equivalences and graded split monomorphisms as cofibrations. For any $k$-algebra $R$, the canonical inclusion $\Z^0(\Pr(R))\map\Perf(\tilde{R})$ is an exact functor between Waldhausen categories. Due to the work of Gillet--Waldhausen, it is known that the Waldhausen $\K$-theory spectrum of $\Perf(\tilde{R})$ is homotopy equivalent to Quillen's algebraic $\K$-theory spectrum of $\tilde{R}$ obtained by $Q$-construction (see Theorem 6.2 of \cite{Gil1} and, also, Lemma 1.1 of \cite{WeiYao}). Henceforth, let $\bK^\alg$ denote the connective algebraic $\K$-theory spectrum. The canonical inclusion of Waldhausen categories induces a map $\nu:\kQ(R)\overset{\sim}{\leftarrow}\bK'(R)\map\bK^{\textup{alg}}(\tilde{R})$ in $\HoSpt$. When $R$ is a $C^*$-algebra, using Proposition 1.3.1 of \cite{Waldhausen} one finds that the image of the composition map $\bK'(R)\map\bK^\alg(\tilde{R})\overset{-\otimes^{\mathbb{L}}_{\tilde{R}}k}{\map}\bK^\alg(k)$ is trivial, since $X\otimes_{\tilde{R}}^{\mathbb{L}} k\simeq_{qi} 0$ for all $X\in\Pr(R)$, i.e., the composition map is weakly equivalent to the $0$ map. It follows that the image of $\pi_i(\nu)$ lies inside $\K^{\textup{alg}}_i(R):=\ker[\pi_i(\bK^{\textup{alg}}(\tilde{R}))\map\pi_i(\bK^{\textup{alg}}(k))]$. Since any $C^*$-algebra is (connectively) $\bK^\alg$-excisive (see Corollary 10.4 of \cite{SusWod2}), there is no ambiguity in talking about the algebraic $\K$-theory spectrum of a nonunital $C^*$-algebra. In fact, for any idempotent $k$-algebra $R$, which satisfies excision in algebraic $\K$-theory, there is an induced map of spectra $\nu:\bK'(R)\map\bK^{\alg}(R)$. The naturality of this map follows from the fact that tensoring with strictly perfect complexes preserves quasi-isomorphisms. At the level of $\pi_0$, the map is given by  $\nu_0=\pi_0(\nu):\KQ_0(R)\overset{\sim}{\leftarrow}\K'_0(R)\map\K^{\textup{alg}}_0(R):=\ker[\K^{\textup{alg}}_0(\tilde{R})\map\K^{\textup{alg}}_0(k)]$, where $\K^{\textup{alg}}_0(\tilde{R})$ (resp. $\K^{\textup{alg}}_0(k)$) can be identified with the Grothendieck group of stable isomorphism classes of finitely generated projective right modules over $\tilde{R}$ (resp. $k)$. Let $S$ be any unital $k$-algebra containing $R$ as a two-sided ideal. The group $\K^\alg_0(R)$ can also be described as the relative group $\K^\alg_0(S\map S/R)$ fitting into a long exact sequence of $\K^\alg$-theory, since $\K^\alg_0$ satisfies excision, i.e., $\K^\alg_0(S\map S/R)\cong\K^\alg_0(\tilde{R}\map k)$. In the paragraph after Lemma \ref{comp} it was mentioned that the map $\nu_0$ is always surjective (see also Proposition 8.1 of \cite{QuiNonunitalK0}).

\begin{rem} \label{QuillenIsom}
There is a natural isomorphism $\bK'\map\kQ$ (see Lemma \ref{comp}) and, on the category $\CAlg$ (or, more generally, on idempotent algebras, which satisfy excision in $\bK^{\alg}$), the map $\nu:\bK'\map\bK^\alg$ is a natural transformation. For the general excision problem in $\bK^{\alg}$ of $\QQ$-algebras, by a result of Corti{\~n}as \cite{CorExcision}, it suffices to check the vanishing of certain birelative algebraic cyclic homology groups, i.e., the obstruction to excision in cyclic homology. For any $R\in\IAlg$, the induced map $\nu_0$, as described above, is an isomorphism if $R$ is a $k$-algebra with local units (see Proposition 6.2 of \cite{QuiNonunitalK0}) or a $C^*$-algebra (see Proposition 6.3 of \cite{QuiNonunitalK0}).
\end{rem}

\section{Excision and topological $\K$-theory} \label{TopK}

\noindent
The definition of an {\it exact sequence} in the category of $C^*$-algebras $\CAlg$ is simply a diagram isomorphic to $0\map I\map A\map A/I\map 0$, where $I$ is a closed two-sided ideal in $A$. An exact sequence is further called {\it split exact} if it admits a splitting $*$-homomorphism $s: A/I\map A$. A functor from $\CAlg$ to an additive category is called {\it split exact} if it sends a split exact sequence of $C^*$-algebras to a direct sum diagram in the target category. Ignoring the involution, the $C^*$-norm and the compatibility of algebra homomorphisms with them, one can similarly define a {\it split exact} functor $F$ from $\IAlg$ to any additive category.

\begin{lem} \label{splitExact}
The functor $\Pr:\IAlg\functor\DGcorr$ is split exact.
 \end{lem}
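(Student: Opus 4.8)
The plan is to factor $\Pr$ as $\IAlg\overset{\Pr}{\functor}\NCS\functor\DGcorr$ and exploit that the second functor $\NCS=\Hmo\functor\Hmo_0=\DGcorr$ is the universal derived Morita invariant and \emph{split exact} functor of Tabuada \cite{TabThesis}: it carries a split short exact sequence of DG categories to a biproduct. Thus it suffices to prove that $\Pr:\IAlg\functor\NCS$ sends a split exact sequence $0\map R\overset{i}{\map}S\overset{\phi}{\map}T\map 0$ in $\IAlg$, with section $s:T\map S$ (so $\phi s=\id$), to a split short exact sequence of DG categories. The section immediately supplies $\Pr(s)$, a section of $\Pr(\phi)$ in $\NCS$; the real content is the exactness of $\Pr(R)\map\Pr(S)\map\Pr(T)$.

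Exactness is precisely Lemma \ref{DGExact} when $S$ and $T$ are unital, so the first step is to reduce the general (possibly nonunital) case to the unital one by unitalization. Passing to unitalizations gives $0\map R\map\tilde S\overset{\tilde\phi}{\map}\tilde T\map 0$, which is again exact with $\ker\tilde\phi=R$, is split by the unital extension $\tilde s$, and has $R\in\IAlg$ while $\tilde S,\tilde T$ are unital. Lemma \ref{DGExact} then makes $\Pr(R)\map\Pr(\tilde S)\map\Pr(\tilde T)$ an exact sequence of DG categories, and $\Pr(\tilde s)$ splits it, so in $\DGcorr$ one obtains a biproduct $\Pr(\tilde S)\cong\Pr(R)\oplus\Pr(\tilde T)$ with structure maps $\Pr(i),\Pr(\tilde\phi),\Pr(\tilde s)$ and a retraction onto $\Pr(R)$.

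It remains to descend from the unitalizations to $S$ and $T$. For this I apply the same mechanism to the augmentation sequences $0\map S\map\tilde S\overset{\epsilon_S}{\map}k\map 0$ and $0\map T\map\tilde T\overset{\epsilon_T}{\map}k\map 0$ (split by the units $u_S,u_T$; here $S,T\in\IAlg$ and $\tilde S,\tilde T,k$ are unital), which by Lemma \ref{DGExact} realize $\Pr(S)$ and $\Pr(T)$ as the reduced summands $\Im p_S$ and $\Im p_T$ cut out by the idempotents $p_S=\id-\Pr(u_S)\Pr(\epsilon_S)$ and $p_T=\id-\Pr(u_T)\Pr(\epsilon_T)$, complementary to a copy of $\Pr(k)$. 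The compatibilities $\epsilon_T\tilde\phi=\epsilon_S$, $\tilde\phi u_S=u_T$, $\epsilon_S\tilde s=\epsilon_T$ and $\tilde s u_T=u_S$ show that $\Pr(\tilde\phi)$ and $\Pr(\tilde s)$ intertwine $p_S$ and $p_T$, while $\epsilon_S$ vanishes on $R\subset S$, forcing $\Pr(\epsilon_S)\Pr(i)=0$, so $p_S$ restricts to the identity on the $\Pr(R)$ summand. Hence $p_S$ commutes with the biproduct projections of $\Pr(\tilde S)\cong\Pr(R)\oplus\Pr(\tilde T)$, and restricting that biproduct along $p_S$ (equivalently, cancelling the matched $\Pr(k)$ summands) yields $\Pr(S)\cong\Pr(R)\oplus\Pr(T)$, realized by $\Pr(i),\Pr(\phi),\Pr(s)$. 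This is the required direct sum diagram, proving split exactness.

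The main obstacle is this last descent: one must check that the $\Pr(k)$ summands on the two sides of $\Pr(S)\oplus\Pr(k)\cong\Pr(R)\oplus\Pr(T)\oplus\Pr(k)$ are genuinely matched, so that the cancellation is legitimate rather than a mere abstract isomorphism of objects. This is exactly what the intertwining relations for $p_S,p_T$ guarantee, and it relies on the idempotent $p_S$ splitting in $\DGcorr$, which holds since the perfect derived categories underlying these objects are idempotent complete (Remark \ref{repDG}). As an alternative to unitalization one can argue directly: the endomorphism $\Pr(s)\Pr(\phi)$ of $\Pr(S)$ is idempotent, its image is $\cong\Pr(T)$ by $\phi s=\id$, and its kernel coincides with $\ker\Pr(\phi)$, which Lemma \ref{HomotopyFibre} identifies (even for nonunital $S,T$) with $\Pr(\tilde S,R)\cong\Pr(R)$ via Lemma \ref{RSR} and Remark \ref{MorInv}; this route avoids unitalization but transfers the same difficulty into identifying that kernel object inside $\DGcorr$.
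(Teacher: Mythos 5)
Your reduction to Tabuada's universal property breaks down at its very first, and most important, step: you assert that once $\Pr(R)\map\Pr(\tilde S)\map\Pr(\tilde T)$ is exact (Lemma \ref{DGExact}) and $\Pr(\tilde s)$ is a section of $\Pr(\tilde\phi)$, the functor $\NCS\functor\DGcorr$ automatically turns this into a biproduct. That is not what Tabuada's theorem says. A \emph{split} exact sequence of DG categories, in the sense required for additive invariants (and for the universal one, $\Hmo_0=\DGcorr$), consists of an exact sequence $\cA\overset{I}{\map}\cB\overset{P}{\map}\cC$ \emph{together with} a retraction $\kappa:\cB\map\cA$ satisfying $\kappa\circ I=\id_{\cA}$, a section $S$ of $P$, and the compatibility that every object of $\cB$ decomposes functorially into a piece from $\cA$ and a piece from $\cC$. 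A section alone provides no natural transformation $\Id_{\cB}\map SP$, hence no such decomposition: a quotient can admit a section without the middle term splitting, and additive invariants are only guaranteed to split sequences equipped with the full splitting data. (Exactness plus a section does split the long exact sequence of $\K$-groups, which is enough for Corollary \ref{splitCor}, but Lemma \ref{splitExact} asserts an isomorphism of \emph{objects} in $\DGcorr$, which does not follow from isomorphisms of $\K$-groups.) So the biproduct $\Pr(\tilde S)\cong\Pr(R)\oplus\Pr(\tilde T)$ in $\DGcorr$, on which all your subsequent cancellation of $\Pr(k)$-summands rests, is unjustified. Your alternative route inherits the same gap: splitting the idempotent $\Pr(s)\Pr(\phi)$ in $\DGcorr$ and identifying the complementary summand with $\Pr(R)$ is precisely the missing datum, and Remark \ref{repDG} (idempotent completeness of the perfect derived categories that are the \emph{objects}) says nothing about splitting idempotent \emph{morphisms} in $\DGcorr$.

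What the paper actually does is construct exactly this missing data, and that is where all the work lies. From Quillen's Morita context $\left(\begin{smallmatrix}\tilde A & \tilde B\\ A & \tilde B\end{smallmatrix}\right)$ and Remark \ref{MorInv} it produces the retraction $\kappa=-\otimes_{\tilde B}A:\Pr(B)\map\Pr(A)$ with $\kappa\circ\kappa'=\id_{\Pr(A)}$, where $\kappa'=-\otimes_{\tilde A}\tilde B=i_*$, and then verifies the two-sided splitting identity $\kappa'\circ\kappa+ s_*j_*=\id_{\Pr(B)}$ directly in $\Hom_{\DGcorr}(\Pr(B),\Pr(B))$ --- a sum of morphisms that only makes sense after linearization, which is exactly why the lemma is stated in $\DGcorr$ rather than $\NCS$ --- using the concrete decomposition $X'=X'A\otimes_{\tilde A}A+ s_*(X'/X'A)$ of a strictly perfect complex $X'\simeq X$ (via Lemma \ref{HomotopyFibre}). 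Note that the paper's proof does not route through Lemma \ref{DGExact} at all: exactness of $\Pr(R)\map\Pr(S)\map\Pr(T)$ is never invoked, and the paper's logic in fact runs the other way (split exactness plus Proposition \ref{AlgExcision} yields the excision Theorem \ref{excision}). Your proposal inverts this logic and, in doing so, omits the retraction and the splitting identity, which are the actual content of the lemma.
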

 
\begin{proof}
For any split exact sequence

\begin{equation} \label{splitEqn}
\xymatrix { 0\ar[r]&A\ar[r]^i & B \ar[r]^j& C\ar[r]\ar@/_1pc/[l]_s& 0},
\end{equation} applying $\Pr$ we obtain the diagram

\beqn
\xymatrix { \Pr(A)\ar[r]^{I=i_*} & \Pr(B) \ar[r]^{J=j_*}& \Pr(C)\ar@/_2pc/[l]_{S=s_*}},
\eeqn where $JI=0$ and $JS= \id_{\Pr(C)}$. 

\noindent
Once again, consider the Morita context $\left(\begin{smallmatrix} \tilde{A} & \tilde{B}\\ A & \tilde{B}\end{smallmatrix}\right)$. Using Remark \ref{MorInv} it is readily verified that the functor $\kappa=-\otimes_{\tilde{B}}{A}:\Pr(B)\map\Pr(A)$ is the left inverse of $\kappa'=-\otimes_{\tilde{A}}\tilde{B}$ in $\DGcorr$, i.e., $\kappa\circ\kappa'=\id_{\Pr(A)}$. It suffices to show that $\kappa'\circ\kappa + SJ=\id_{\Pr(B)}$ in $\Hom_{\DGcorr}(\Pr(B),\Pr(B))$. There is diagram of natural transformations $\kappa'\circ\kappa\map\Id_{\Pr(B)}\leftarrow SJ$, such that for all $X\in\Pr(B)$, one has a diagram in $\H^0(\Pr(B))$ (see Lemma \ref{HomotopyFibre})

\beqn
\kappa'\circ\kappa(X)\hookrightarrow\Id_{\Pr(B)}(X)=X\hookleftarrow SJ(X)
\eeqn However, this diagram is homotopy equivalent to $X'A\otimes_{\tilde{A}} A\hookrightarrow X'\overset{S}{\leftarrow} X'/X'A$, where $X'\simeq X$ is a strictly perfect complex. Since $X'=X'A\otimes_{\tilde{A}} A + S(X'/X'A)$, one concludes that $\kappa'\circ\kappa +SJ =\id_{\Pr(B)}$ in $\DGcorr$. 
\end{proof}

\begin{cor} \label{splitCor}
The functor $\kQ:\IAlg\functor\HoSpt$ is split exact.
\end{cor}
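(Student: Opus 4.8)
The plan is to deduce Corollary \ref{splitCor} from Lemma \ref{splitExact} by applying the Waldhausen $\K$-theory functor $\bK_{dg}$, which we recall factors through $\DGcorr$. The key observation is that $\kQ = \bK_{dg}\circ\Pr$ by Definition \ref{NKQ}, and that $\bK_{dg}:\DGcat\functor\HoSpt$ factors through $\DGcorr$ (as noted after Definition \ref{NKQ}, using \cite{Tab3,DugShi}); in particular it descends to a functor $\overline{\bK}_{dg}:\DGcorr\functor\HoSpt$. Since $\DGcorr$ is an \emph{additive} category and $\overline{\bK}_{dg}$ is \emph{additive} (the passage from $\Hmo$ to $\Hmo_0=\DGcorr$ via $\K_0$ is precisely the universal split-exact and additive construction recalled in Subsection \ref{MoritaModel}), it sends direct sum diagrams in $\DGcorr$ to direct sum diagrams in $\HoSpt$, i.e.\ to products/coproducts of spectra.

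First I would take an arbitrary split exact sequence \eqref{splitEqn} in $\IAlg$ and apply Lemma \ref{splitExact} to conclude that the image diagram
\[
\Pr(A)\overset{I}{\map}\Pr(B)\overset{J}{\map}\Pr(C)
\]
is a direct sum (split exact) diagram in $\DGcorr$; concretely, Lemma \ref{splitExact} produces morphisms in $\DGcorr$ exhibiting $\Pr(B)\cong\Pr(A)\oplus\Pr(C)$ compatibly with $I$, $J$ and the section $S$. Next I would apply the additive functor $\overline{\bK}_{dg}$. Because an additive functor between additive categories preserves biproducts, the resulting diagram
\[
\kQ(A)\map\kQ(B)\map\kQ(C)
\]
becomes a direct sum diagram in $\HoSpt$, i.e.\ $\kQ(B)\simeq\kQ(A)\oplus\kQ(C)$ in the homotopy category of spectra, with the maps induced by $I$, $J$, $S$ realizing the split. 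This is precisely the assertion that $\kQ:\IAlg\functor\HoSpt$ is split exact.

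The only point requiring a little care, and the step I expect to be the main (though minor) obstacle, is justifying that $\bK_{dg}$ genuinely descends to an \emph{additive} functor on $\DGcorr$ rather than merely a functor on the underlying category $\NCS\simeq\Hmo$. The factorization through $\NCS$ is immediate from Morita invariance of $\bK_{dg}$, but to convert direct sum diagrams in the additive envelope $\DGcorr=\Hmo_0$ into direct sum diagrams of spectra one needs that $\bK_{dg}$ respects the additive structure on morphisms, which is exactly the content of the cited results \cite{Tab3,DugShi} identifying $\bK_{dg}$ as a functor out of $\DGcorr$ (equivalently, its split-exactness as a functor on $\DGcat$, which is the universal property characterizing $\Hmo_0$). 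Granting this, the corollary is a formal consequence: split-exactness of the composite $\kQ=\overline{\bK}_{dg}\circ\Pr$ follows from split-exactness of $\Pr$ (Lemma \ref{splitExact}) together with additivity of $\overline{\bK}_{dg}$.
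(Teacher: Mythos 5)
Your proposal is correct and is exactly the argument the paper intends: the corollary is stated as an immediate consequence of Lemma \ref{splitExact}, with the (implicit) justification being precisely that $\kQ=\bK_{dg}\circ\Pr$ and that $\bK_{dg}$ descends to an additive functor on $\DGcorr$ (via \cite{Tab3,DugShi}), so it carries the direct sum diagram $\Pr(B)\cong\Pr(A)\oplus\Pr(C)$ in $\DGcorr$ to a direct sum diagram of spectra. Your care about the descent of $\bK_{dg}$ to $\DGcorr$ being additive, rather than merely a factorization through $\NCS$, is exactly the right point to flag, and it is covered by the cited references.
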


\begin{defn} \label{excisDef}
We call a functor from $F:\IAlg\functor\HoSpt$ {\it connectively excisive} if, whenever $0\map R\map S\map T\map 0$ is an exact sequence  $\IAlg$, the induced diagram $F(R)\map F(S)\map F(T)$ is a connective homotopy fibration, i.e., it gives rise to a long exact sequence of homotopy groups

\beqn
\cdots \map \pi_1(F(R))\map\pi_1(F(S))\map\pi_1(F(T))\map\pi_0(F(R))\map\pi_0(F(S))\map\pi_0(F(T)).
\eeqn
\end{defn}

\begin{thm} \label{excision}
The functor $\kQ:\IAlg\functor\HoSpt$ is a connectively excisive.
\end{thm}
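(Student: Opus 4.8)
The plan is to combine the two structural properties already established for the functor $\kQ$ on the category $\IAlg$: connective excisiveness for ideals sitting inside \emph{unital} algebras (Proposition \ref{AlgExcision}) and split exactness (Corollary \ref{splitCor}). The difficulty in Proposition \ref{AlgExcision} is that the surrounding algebras $S$ and $T$ are assumed unital, whereas in Definition \ref{excisDef} the extension $0\map R\map S\map T\map 0$ lives entirely inside $\IAlg$ with no unitality hypothesis. So the main task is to \emph{reduce} the general idempotent extension to the unital case. First I would unitize: given an arbitrary exact sequence $0\map R\map S\map T\map 0$ in $\IAlg$, I would pass to the induced commuting diagram of extensions obtained by adjoining units, so that the bottom row becomes an extension with unital outer terms, while the original row is recovered as the kernel of the map of rows down to the split extension $0\map 0\map k\overset{\id}{\map} k\map 0$.

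Concretely, I would consider the morphism of short exact sequences
\beqn
\xymatrix{
0\ar[r] & R\ar[r]\ar[d]_{\id} & S\ar[r]\ar[d] & T\ar[r]\ar[d] & 0\\
0\ar[r] & R\ar[r] & \tilde{S}\ar[r] & \tilde{T}\ar[r] & 0,
}
\eeqn
where the unitization $\tilde{S}\map\tilde{T}$ has kernel $R$ (the left vertical map is the identity since $R$ is a common ideal). The bottom row has unital outer terms, so Proposition \ref{AlgExcision} applies directly to it: $\kQ(R)\map\kQ(\tilde{S})\map\kQ(\tilde{T})$ is a connective homotopy fibration. Now I would apply split exactness (Corollary \ref{splitCor}) to the split extensions $0\map S\map\tilde{S}\map k\map 0$ and $0\map T\map\tilde{T}\map k\map 0$ (these split because the unitization always admits the augmentation splitting $k\hookrightarrow\tilde{S}$), obtaining direct-sum decompositions $\kQ(\tilde{S})\simeq\kQ(S)\oplus\kQ(k)$ and $\kQ(\tilde{T})\simeq\kQ(T)\oplus\kQ(k)$ in $\HoSpt$, compatibly with the map induced by $\tilde{S}\map\tilde{T}$.

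The final step is a diagram chase on the long exact homotopy sequence of the bottom-row fibration: the compatible splittings let me cancel the common $\kQ(k)$ summands on each term, so that the long exact sequence of the bottom fibration descends to a long exact sequence
\beqn
\cdots\map\pi_1(\kQ(R))\map\pi_1(\kQ(S))\map\pi_1(\kQ(T))\map\pi_0(\kQ(R))\map\pi_0(\kQ(S))\map\pi_0(\kQ(T)),
\eeqn
which is exactly the connective excisiveness of $\kQ$ on $\IAlg$ in the sense of Definition \ref{excisDef}. The step I expect to be the main obstacle is verifying that the splittings of the two vertical fibrations $\kQ(\tilde{S})\simeq\kQ(S)\oplus\kQ(k)$ and $\kQ(\tilde{T})\simeq\kQ(T)\oplus\kQ(k)$ are genuinely \emph{compatible} with the horizontal maps induced by $\tilde{S}\map\tilde{T}$ and with the identity on $R$, so that the cancellation of the $\kQ(k)$ factor is natural and the resulting connecting maps coincide with the expected ones; this requires the naturality of the split-exactness isomorphism of Corollary \ref{splitCor} with respect to the augmentation maps, together with a careful identification of the fibre of $\kQ(S)\map\kQ(T)$ with the fibre of $\kQ(\tilde{S})\map\kQ(\tilde{T})$, both being $\kQ(R)$.
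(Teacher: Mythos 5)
Your proposal is correct and follows essentially the same route as the paper: unitize the extension to $0\map R\map \tilde{S}\map \tilde{T}\map 0$, apply Proposition \ref{AlgExcision} to this sequence with unital outer terms, invoke split exactness (Corollary \ref{splitCor}) to get $\kQ(\tilde{S})\cong\kQ(S)\oplus\kQ(k)$ and $\kQ(\tilde{T})\cong\kQ(T)\oplus\kQ(k)$, and then chase the commutative diagram comparing the two rows to cancel the $\kQ(k)$ summands. The compatibility concern you flag at the end is precisely the step the paper leaves to the reader ("one can check"), so your account is, if anything, slightly more careful on the same argument.
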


\begin{proof}
This result is a formal consequence Propostion \ref{AlgExcision} and Corollary \ref{splitCor}. Indeed, any exact sequence $0\map R\map S\map T\map 0$ in $\IAlg$ gives rise to another exact sequence $0\map R\map \tilde{S}\map \tilde{T}\map 0$. Using Proposition \ref{AlgExcision} we conclude that the induced map $\kQ(R)\map\kQ(\tilde{S})\map\kQ(\tilde{T})$ is a connective homotopy fibration. Since $\kQ$ is split exact, $\kQ(\tilde{S})\cong\kQ(S)\oplus\kQ(k)$ and $\kQ(\tilde{T})\cong\kQ(T)\oplus\kQ(k)$. From the following commutative diagram of maps of spectra

\beqn
\xymatrix{
\kQ(R)\ar[r]\ar[d] &\kQ(S)\ar[r]\ar[d] &\kQ(T)\ar[d]\\
\kQ(R)\ar[r] &\kQ(\tilde{S})\ar[r] &\kQ(\tilde{T})
}
\eeqn one can check that the diagram $\kQ(R)\map\kQ(S)\map\kQ(T)$ is also a connective homotopy fibration.
\end{proof}

\noindent
Let us now stabilize the functor $\Pr:\CAlg\functor\NCS$ by defining a new functor $\PrK(-)=\Pr(-\prot\cpt)$. Hence, by definition, for any $C^*$-algebra $A$, one finds $\PrK(A)=\Pr(A_\cpt)$.

\begin{lem} \label{KStable}
The functor $\PrK:\CAlg\functor\NCS$ is $C^*$-stable and, when restricted to $\Csep$, it factors through $\CSp$.
\end{lem}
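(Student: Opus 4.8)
The plan is to prove the two assertions of Lemma \ref{KStable} separately, leaning on the matrix-stability already established for $\Pr$ and on the universal property of the stabilization $-\prot\cpt$. First I would address $C^*$-stability. By definition $\PrK(A)=\Pr(A\prot\cpt)$, and I must show that the corner embedding $A\map A\prot\cpt$ induces an isomorphism in $\NCS$ after applying $\PrK$, i.e. that $\Pr(A\prot\cpt)\map\Pr(A\prot\cpt\prot\cpt)$ is an isomorphism. The cleanest route is to invoke Proposition 3.31 of \cite{CunMeyRos} (cited in the introduction): if a functor on $C^*$-algebras is matrix stable, then the functor $F(-\prot\cpt)$ is automatically $C^*$-stable. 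Since $\Pr$ is $\MM_n$-stable for all $n$ by Proposition \ref{StabHtpy}, this gives $C^*$-stability of $\PrK=\Pr(-\prot\cpt)$ immediately. The only point requiring care is checking that the hypotheses of that general result are met in $\NCS$ rather than in an additive category; matrix stability in the sense of Proposition \ref{StabHtpy} is exactly the input needed, so I would state precisely which stabilization-theorem formulation applies.

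Next I would prove the factorization of the restricted functor $\PrK\colon\Csep\map\NCS$ through $\CSp$. Recall from Remark \ref{Morita} that the canonical functor $\Csep\map\CSp$ is the \emph{universal} $C^*$-stable functor on $\Csep$ (Proposition 39 of \cite{Mey1}). Therefore, to obtain the desired factorization it suffices to verify that $\PrK$, regarded as a functor $\Csep\map\NCS$, is itself $C^*$-stable. This is precisely the first assertion of the lemma, restricted to separable algebras, so the factorization follows formally from the universal property: there is a unique functor $\CSp\map\NCS$ through which $\PrK$ factors. I would phrase this as a one-line application once $C^*$-stability is in hand.

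The main obstacle, such as it is, lies in the hypotheses of the cited stabilization theorem and in making sure the target categories match up. The universal property of $\Csep\map\CSp$ is stated for $C^*$-stable functors into a target category of a particular flavour; one should confirm that $\NCS$ qualifies as an admissible target and that $\PrK$ is genuinely $C^*$-stable as a functor to $\NCS$ (not merely after passing to $\DGcorr$ or to $\K$-theory). Here I would emphasize that $C^*$-stability has already been proved at the level of $\NCS$ in the first part, so no further linearization is needed. A secondary subtlety is functoriality with respect to $*$-homomorphisms: $\Pr$ was shown to be functorial on $\CAlg$ in the discussion preceding Remark \ref{MorInv}, and tensoring with $\cpt$ is functorial, so $\PrK$ is a well-defined functor; I would note this explicitly but not belabour it. Overall I expect the proof to be short, essentially a citation of Proposition 3.31 of \cite{CunMeyRos} together with Proposition \ref{StabHtpy} for the first claim, and the universal property from Remark \ref{Morita} for the second.
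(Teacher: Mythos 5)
Your proposal is correct and follows essentially the same route as the paper: the paper deduces $C^*$-stability of $\PrK$ from the $\MM_2$-stability of $\Pr$ (Lemma \ref{MStable}) via Proposition 3.31 of \cite{CunMeyRos}, and obtains the factorization through $\CSp$ from the universal property of $\Csep\functor\CSp$ as the universal $C^*$-stable functor (Proposition 39 of \cite{Mey1}). The only cosmetic difference is that you cite Proposition \ref{StabHtpy} ($\MM_n$-stability for all $n$) where the paper cites the underlying Lemma \ref{MStable} ($\MM_2$-stability), which is the exact hypothesis the cited stabilization result needs.
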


\begin{proof}
The $C^*$-stability of $\PrK$ follows from Lemma \eqref{MStable}, since it is known that, if a functor $F$ from $\CAlg$ to any other category is $\MM_2$-stable, then the functor $F(-\prot\cpt)$ is $C^*$-stable (see Proposition 3.31 of \cite{CunMeyRos}). 

That the functor $\PrK$ factors through $\CSp$ follows from the characterization of $\Csep\functor\CSp$ as the universal $C^*$-stable functor (see Proposition 39 of \cite{Mey1}).
\end{proof}

\noindent
The induced functor $\PrK:\CSp\functor\NCS$ is a passage between noncommutative spaces in two different settings.

\begin{lem} \label{Htpy}
The functor $\PrK:\CAlg\functor\DGcorr$ is homotopy invariant and satisfies Bott periodicity. As a consequence $\K'_i(-\prot\cpt)$ is homotopy invariant for all $i\geqslant 0$.
\end{lem}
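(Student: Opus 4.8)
The plan is to identify $\PrK$ as a $C^*$-stable, split exact functor valued in an additive category, and then to feed these two properties into the standard operator-algebraic machinery which forces homotopy invariance and Bott periodicity. First I would assemble the two structural inputs. The $C^*$-stability of $\PrK:\CAlg\functor\NCS$, and hence of its composite $\PrK:\CAlg\functor\DGcorr$, is precisely Lemma \ref{KStable}. For split exactness, I would observe that stabilization preserves split exact sequences of $C^*$-algebras: given a split exact sequence $0\map A\map B\map C\map 0$ with section $s$, applying $-\prot\cpt$ yields $0\map A\prot\cpt\map B\prot\cpt\map C\prot\cpt\map 0$, which is split by $s\prot\id$ and stays exact because $\cpt$ is nuclear. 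As every $C^*$-algebra is idempotent, all three stabilized algebras lie in $\IAlg$, so Lemma \ref{splitExact} applies and sends this sequence to a direct sum diagram in $\DGcorr$. Therefore $\PrK=\Pr(-\prot\cpt):\CAlg\functor\DGcorr$ is split exact.

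Given these, homotopy invariance is a formal consequence of the theorem that any $C^*$-stable and split exact functor into an additive category is automatically homotopy invariant (see \cite{CunMeyRos}). This importation is the one genuinely nontrivial ingredient and is where I expect the real content --- and hence the main obstacle --- to sit; within the present argument it is entirely off the shelf, the only thing to verify being that its hypotheses ($C^*$-stability, split exactness, additive target $\DGcorr$) are met, which the previous paragraph arranges.

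For Bott periodicity I would restrict to the separable subcategory $\Csep$ and invoke Higson's universal characterization of bivariant $\K$-theory: a homotopy invariant, $C^*$-stable, split exact functor factors uniquely through $\iota:\Csep\functor\KKcat$. Since $\KK$-theory is two-periodic, double suspension is naturally isomorphic to the identity in $\KKcat$, and transporting this isomorphism along the factorization gives a natural isomorphism $\PrK(C_0(\RR^2)\prot A)\cong\PrK(A)$ in $\DGcorr$. The separability restriction inherent to this route can be removed by instead deriving the Bott map directly from the Toeplitz extension, which uses only the three established properties.

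Finally, the consequence for $\K'$ follows by functoriality of Waldhausen $\K$-theory. By Lemma \ref{comp} and Definition \ref{NKQ}, $\K'_i(A\prot\cpt)\cong\KQ_i(A\prot\cpt)=\pi_i(\bK_{dg}(\Pr(A\prot\cpt)))=\pi_i(\bK_{dg}(\PrK(A)))$, where $\bK_{dg}:\DGcorr\functor\HoSpt$ is the Waldhausen $\K$-theory functor. If $f_0,f_1:A\map B$ are homotopic $*$-homomorphisms, then $\PrK(f_0)=\PrK(f_1)$ in $\DGcorr$ by homotopy invariance, hence $\bK_{dg}(\PrK(f_0))=\bK_{dg}(\PrK(f_1))$ in $\HoSpt$, and so the two maps agree on every $\pi_i$. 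Thus $\K'_i(-\prot\cpt)$ is homotopy invariant for all $i\geqslant 0$; the only point needing care is that $\bK_{dg}$ genuinely descends to $\DGcorr$, which was recorded when $\kQ$ was defined.
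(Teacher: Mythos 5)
Your proposal is correct, and its structural skeleton coincides with the paper's: both proofs consist of verifying that $\PrK$ is $C^*$-stable (Lemma \ref{KStable}) and split exact (stabilization preserves split exact sequences since $\cpt$ is nuclear, then Lemma \ref{splitExact} applies), and then invoking the standard operator-algebraic machinery. The difference lies in how Bott periodicity is extracted. The paper cites a single result, Theorem 3.2.2 of \cite{Hig2}, which asserts that \emph{any} $C^*$-stable and split exact functor on $\CAlg$ is automatically both homotopy invariant \emph{and} Bott periodic --- no separability hypothesis, both conclusions at once. You instead obtain homotopy invariance from the analogous statement in \cite{CunMeyRos} and then derive Bott periodicity by factoring through $\KKcat$ via Higson's universal characterization and transporting the $2$-periodicity of $\KK$; this is a legitimate argument, but it only covers $\Csep$, and it quietly front-loads the ingredient of Theorem \ref{MainThm}, which the paper establishes later. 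Your proposed fix for the non-separable case --- running the Bott map through the Toeplitz extension using stability, split exactness and homotopy invariance --- is in substance the proof of the very theorem the paper cites, so the two routes converge; the paper's citation is simply more economical and sidesteps the separability caveat altogether. Your final paragraph, deducing homotopy invariance of $\K'_i(-\prot\cpt)$ from Lemma \ref{comp} and the fact that $\bK_{dg}$ descends to $\DGcorr$, is exactly the implicit content of the paper's ``as a consequence'' and is spelled out correctly.
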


\begin{proof}
This is an immediate consequence of Theorem 3.2.2. of \cite{Hig2} which says that any $C^*$-stable and split exact functor on $\CAlg$ is automatically homotopy invariant and satisfies Bott periodicity. Observe that applying the functor $-\prot\cpt$ does not affect the split exactness of a diagram in $\CAlg$
\end{proof}

\noindent
Now we show that information about the topological $\K$-theory of a $C^*$-algebra can be extracted from the image of the functor $\PrK$. The proof is modelled along the lines of \cite{Hig2}. For any $C^*$-algebra $A$, let $\bK^{\textup{top}}(A)\langle 0\rangle$ denote the connective cover of the topological $\K$-theory spectrum of $A$. It is known that $\K^{\textup{top}}_0$ is a $C^*$-stable functor.

\noindent
For any $C^*$-algebra $A$, let $c^A:\bK^{\textup{alg}}(A)\map\bK^{\textup{top}}(A)\langle 0\rangle$ be the comparison map from the connective algebraic $\K$-theory to the connective cover of topological $\K$-theory at the level of spectra (see Theorem 2.1 of \cite{RosComparison}). The homotopy fibre $\hofib(c^A)$ is the obstruction to $c^A$ being an isomorphism. Precomposing $c^A$ with the map $\kQ(A)\overset{\nu^A}{\map}\bK^{\textup{alg}}(A)$ we get a comparison map $c^A_{\KQ}:\kQ(A)\map\bK^{\textup{top}}(A)\langle 0\rangle$. There is a subtlety in defining this composition. The natural recipient of the map $\nu^A$ is the Waldhausen model of $\bK^\alg(A)$, whereas the natural domain of the comparison map $c^A$ is the one arising from $\textup{BGL}^+(A)$ with discrete topology. One has to use the identification "$+=Q=S_{\bullet}$".

\begin{thm} \label{KSpec}
Let $A$ be a $C^*$-algebra and set $A_\cpt=A\prot\cpt$. Then $\bK_{dg}(\PrK(A))=\kQ(A_\cpt)$ is homotopy equivalent to $\bK^{\textup{top}}(A_\cpt)\langle 0\rangle$ via $c^{A_\cpt}_{\KQ}=c^{A_\cpt}\circ\nu^{A_\cpt}$. In particular, if $A$ is stable, then $\kQ(A)\cong\kQ(A_\cpt)\cong\bK^{\textup{top}}(A_\cpt)\langle 0\rangle\cong\bK^{\textup{top}}(A)\langle 0\rangle$. 
\end{thm}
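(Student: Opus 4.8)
The plan is to establish the homotopy equivalence by combining three ingredients already assembled in the excerpt: the excision property of $\kQ$ (Theorem \ref{excision}), the $C^*$-stability and homotopy invariance of $\PrK$ (Lemmas \ref{KStable} and \ref{Htpy}), and the comparison map $c^A_{\KQ}$ together with the known behavior of the topological $\K$-theory functor. The strategy mirrors the characterization of topological $K$-theory as the universal $C^*$-stable, split exact, homotopy invariant functor on $C^*$-algebras; I would show that $\kQ(-\prot\cpt)$ satisfies all these properties connectively and hence must agree with $\bK^{\textup{top}}(-\prot\cpt)\langle 0\rangle$ via the natural comparison map.

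Let me sketch the steps in order. First, I would observe that by Lemma \ref{Htpy} the functor $\PrK=\Pr(-\prot\cpt)$ is $C^*$-stable, split exact, homotopy invariant, and Bott periodic as a functor into $\DGcorr$, so that $\kQ(-\prot\cpt)=\bK_{dg}\circ\PrK$ inherits these properties connectively (split exactness comes from Corollary \ref{splitCor}, excision from Theorem \ref{excision}). Second, I would verify that the comparison map $c^{A_\cpt}_{\KQ}=c^{A_\cpt}\circ\nu^{A_\cpt}$ is a natural transformation of connective homotopy functors both sides of which are $C^*$-stable, split exact, and homotopy invariant; here the key input is Remark \ref{QuillenIsom}, which tells us that $\nu_0$ is an isomorphism on $C^*$-algebras, giving an isomorphism on $\pi_0$ to start the induction. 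Third, I would run a dimension-shifting / Bott-periodicity argument: using the cone and suspension sequences for $C^*$-algebras together with connective excision, I would compare the long exact sequences on homotopy groups induced by $c^{A_\cpt}_{\KQ}$ on both sides, and use the five lemma to propagate the isomorphism from $\pi_0$ to all $\pi_i$, $i\geqslant 0$. Finally, the last assertion follows immediately: if $A$ is stable then $A\cong A_\cpt$, so $\kQ(A)\cong\kQ(A_\cpt)$, and both equal $\bK^{\textup{top}}(A_\cpt)\langle 0\rangle\cong\bK^{\textup{top}}(A)\langle 0\rangle$ by the stability of $\K^{\textup{top}}$.

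The main obstacle I anticipate is the inductive propagation from $\pi_0$ to higher homotopy groups. Unlike the purely algebraic setting, one must reconcile the connective $\kQ$-theory with topological $\K$-theory, and the subtlety flagged in the paragraph before the theorem—that the natural recipient of $\nu^A$ is the Waldhausen model of $\bK^\alg(A)$ while the natural domain of $c^A$ arises from $\textup{BGL}^+(A)$ with discrete topology—must be handled carefully via the identification ``$+=Q=S_\bullet$''. In particular, because $\kQ$ only agrees with $\bK^\alg$ at the level of $\pi_0$ in general, the isomorphism at higher degrees cannot come from $\nu$ alone; rather, it must be forced by the universal property of $\K^{\textup{top}}$ applied to the homotopy-invariant, $C^*$-stable, split exact functor $\kQ(-\prot\cpt)$.

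The cleanest route, and the one I would pursue, is to invoke Higson's characterization (Theorem 3.2.2 of \cite{Hig2}, already cited in Lemma \ref{Htpy}): any $C^*$-stable, split exact functor on $\CAlg$ is homotopy invariant and half-exact, hence factors through a universal such theory. Since $\K^{\textup{top}}$ is initial (or terminal, depending on the formulation) among connective homotopy functors receiving a natural transformation from $\bK^\alg$ that is a $\pi_0$-isomorphism on $C^*$-algebras, the comparison map $c^{A_\cpt}_{\KQ}$ is forced to be a weak equivalence. I would make this precise by checking that $\kQ(-\prot\cpt)$ and $\bK^{\textup{top}}(-\prot\cpt)\langle 0\rangle$ have isomorphic associated connective homology theories and that $c^{A_\cpt}_{\KQ}$ realizes this isomorphism, completing the proof.
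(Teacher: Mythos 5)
Your middle sketch---dimension shifting with the cone--suspension sequence plus a five-lemma induction---is the right idea and is essentially what the paper does, but the route you ultimately commit to in your last paragraph is a genuine gap. The paper never invokes any universal property of $\K^{\textup{top}}$. It runs the dimension-shifting argument against $\bK^{\alg}(-\prot\cpt)$ along $\nu$: using the $\pi_0$-isomorphism of Remark \ref{QuillenIsom}, the vanishing of both stabilized theories on the contractible algebra $\Cone A_\cpt$ (Lemma \ref{Htpy} for $\K'_i(-\prot\cpt)$, Proposition 10.6 of \cite{SusWod2} for $\K^{\alg}_i(-\prot\cpt)$), and the two long exact sequences (Theorem \ref{excision} on one side, excision for $\bK^{\alg}$ of $C^*$-algebras on the other), it shows that the boundary maps identify $\K_{i+1}(A_\cpt)$ with $\K_i(\Sigma A_\cpt)$ in both theories, whence $\nu^{A_\cpt}$ is an isomorphism on all homotopy groups by induction. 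This flatly contradicts your third paragraph: after $\cpt$-stabilization the higher isomorphisms \emph{do} come ``from $\nu$ alone.'' The bridge to topology is then a single external input that your proposal never invokes: the Suslin--Wodzicki theorem (Karoubi's conjecture; see \cite{SusWod2} and Theorem 3.3 of \cite{RosComparison}), which says that for a stable $C^*$-algebra the comparison map $c^{A_\cpt}:\bK^{\alg}(A_\cpt)\map\bK^{\textup{top}}(A_\cpt)\langle 0\rangle$ is a connective equivalence. Without it, or something equivalent, nothing in your argument crosses from algebraic to topological $\K$-theory.

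The universal-property argument you substitute for this does not exist. Higson's theorem (Theorem 3.2.2 of \cite{Hig2}) characterizes $\KK$---not $\K^{\textup{top}}$---as universal among $C^*$-stable split exact functors; it implies that $\kQ(-\prot\cpt)$ factors through $\KKcat$, but being $C^*$-stable, split exact and homotopy invariant does not force a functor to agree with $\bK^{\textup{top}}\langle 0\rangle$: for instance $\KK_0(B,-)$ or $\bK^{\textup{top}}(-\prot B)\langle 0\rangle$ for an arbitrary fixed $B$ have all these properties. The refined statement you fall back on---that $\K^{\textup{top}}$ is initial (or terminal) among connective functors receiving a $\pi_0$-isomorphic natural transformation from $\bK^{\alg}$---is not a theorem in the cited literature, and it does not even typecheck for your purposes: $\nu$ maps $\kQ$ \emph{into} $\bK^{\alg}$, not the other way around, so $\kQ(-\prot\cpt)$ is not an object ``under'' $\bK^{\alg}$ in the required sense. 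If instead you insist on running your five-lemma step directly against $\bK^{\textup{top}}$ (your second paragraph), that can be made to work, but it needs two inputs you never supply: the isomorphism $\K^{\alg}_0(B)\cong\K^{\textup{top}}_0(B)$ for the stabilized suspensions $B$ (Remark \ref{QuillenIsom} only gives $\KQ_0\cong\K^{\alg}_0$), and the compatibility of $c^{A_\cpt}_{\KQ}$ with the boundary maps of the two long exact sequences, which is exactly the kind of technical point the paper avoids by comparing the two Waldhausen-style theories first and citing Suslin--Wodzicki once at the end.
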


\begin{proof}
The proof uses a standard dimension shifting argument for $C^*$-algebras using the `cone-suspension' exact sequence 

\begin{equation} \label{pathFib}
0\map\C_0((0,1))\prot A\map\C_0([0,1))\prot A\map A\map 0.
\end{equation}
 
Let us set $\C_0([0,1))\prot A) =\Cone A$ and $\C_0((0,1))\prot A = \Sigma A$. The functor $-\prot\cpt$ is exact, since $\cpt$ is a nuclear exact $C^*$-algebra. As discussed in Remark \ref{QuillenIsom}, the natural map $\kQ(A)\overset{\sim}{\leftarrow}
\bK'(A)\overset{\nu}{\map}\bK^{\textup{alg}}(A)$ in $\HoSpt$ induces an isomorphism at the level of $\pi_0$. Using Theorem \ref{excision} and the natural transformation $\nu_\cpt:\bK'(-\prot\cpt)\map\bK^\alg(-\prot\cpt)$, we obtain the following morphism of long exact sequences:

\beqn
\xymatrix{
\K'_1(\Cone A_\cpt)
\ar[r]
\ar[d]
& \K'_1(A_\cpt)
\ar[d]
\ar[r]
&\K'_0(\Sigma A_\cpt)
\ar[d]
\ar[r]
& \K'_0(\Cone A_\cpt)
\ar[d]
\ar[r]
&\K'_0(A_\cpt)
\ar[d] \\
 \K_1^{\textup{alg}}(\Cone A_\cpt) 
 \ar[r]
 &\K^{\textup{alg}}_1(A_\cpt)
 \ar[r]
 &\K^{\textup{alg}}_0(\Sigma A_\cpt)
 \ar[r]
 & \K_0^{\textup{alg}}(\Cone A_\cpt)
 \ar[r]
& \K^{\textup{alg}}_0(A_\cpt)\; .
 }
\eeqn We know that the three vertical arrows from the right are isomorphisms and $\Cone A_\cpt$ is contractible, i.e., homotopy equivalent to the zero $C^*$-algebra. Now we exploit the homotopy invariance of $\K'_i(-\prot\cpt)$ and $\K_i^{\textup{alg}}(-\prot\cpt)$ (see Proposition 10.6 of \cite{SusWod2}) to deduce that $\K'_i(\Cone A_\cpt)\cong\K_i^{\textup{alg}}(\Cone A_\cpt)\cong0$, whence the boundary maps $\K_1^{\textup{alg}}(A_\cpt)\map\K_0^{\textup{alg}}(\Sigma A_\cpt)$ and $\K'_1(A_\cpt)\map\K'_0(\Sigma A_\cpt)$ are isomorphisms. It follows immediately that $\K'_1(A_\cpt)\map\K_1^{\textup{alg}}(A_\cpt)$ is an isomorphism. Therefore, we have $\K'_i(A_\cpt)\map\K_i^{\textup{alg}}(A_\cpt)$ for $i=0,1$ and the isomorphisms for $i\geqslant 2$ follow similarly by induction on $i$. Thanks to the Theorem of Suslin--Wodzicki (see \cite{SusWod2}), we know that for a stable $C^*$-algebra (like $A_\cpt$) the algebraic $\K$-theory spectrum is (connectively) homotopy equivalent to the topological $\K$-theory spectrum via the comparison map $c^{A_\cpt}$ (see also, e.g., Theorem 3.3 of \cite{RosComparison}). Thus we have $\kQ(A_\cpt)\cong\bK^\alg(A_\cpt)\cong\bK^{\textup{top}}(A_\cpt)\langle 0\rangle$.
\end{proof}

\begin{rem}
Due to the Bott $2$-periodicity of the topological $\K$-theory groups, one can restrict one's attention to the Waldhausen $\K_0$ and $\K_1$-groups of $\Z^0(\Pr(A_\cpt))$. We already know how the $\K_0$-groups look like. For a description of the $\K_1$-groups we refer the readers to \cite{MurTon}. The above result actually exhibits $\kQ(A_\cpt)$ as an `explicit' model of a connective cover $\bK^{\textup{top}}(A_\cpt)\langle 0\rangle$.
\end{rem}

\noindent
Consider the following diagram in $\HoSpt$, whose rows are distinguished triangles

\beqn
\xymatrix{
\kQ(A)\ar[d]_{\nu^A}\ar[r]^{\iota} & \kQ(A_\cpt) \ar[d]^{\nu^{A_\cpt}}\ar[r] & \Cone(\iota)\ar[d]^\xi \\
\bK^{\textup{alg}}(A) \ar[r]^{\iota^{\textup{alg}}} & \bK^{\textup{alg}}(A_\cpt) \ar[r] & \Cone(\iota^\alg)
 }.
\eeqn Since $\nu^{A_\cpt}$ is an isomorphism (follows from Theorem \ref{KSpec}), using the $3\times 3$-Lemma we get that $\Cone(\xi)$ isomorphic to $\Sigma\Cone(\nu^A)$. If $\Cone(\iota)$ vanishes, then $\Cone(\xi)$ is isomorphic to $\Cone(\iota^\alg)$. The nonvanishing of $\Cone(\iota)$ (resp. $\Cone(\iota^\alg)$) would indicate the failure of $C^*$-stability of the functor $\KQ$ (resp.  $\K^\alg$). Although the $\KQ$-theory has better invariance properties than the $\K^\alg$-theory, it is not expected to be $C^*$-stable. Both $\Cone(\iota)$ and $\Cone(\iota^\alg)$ vanish on the category of stable $C^*$-algebras. 

\begin{rem}
The author cannot describe the homotopy fibre $\hofib(c^A_{\KQ})$ at the moment. It was shown in \cite{CorTho1} that, for a locally convex $\CC$-algebra $A$, stabilized by an operator ideal $A\prot\cJ$, the cofibre of the comparison map $c^{A\prot\cJ}$ is homotopy equivalent to the algebraic cyclic homology spectrum $\Sigma\bHC(A\prot\cJ)$. Relying on the vanishing results about $\bHC$, announced earlier by Wodzicki, the authors of ibid. could prove Karoubi's conjecture for a wide class of Fr{\'e}chet algebras stabilized by the compact operators.
\end{rem}

 \begin{thm} \label{MainThm}
 The restricted functor $\PrK:\Csep\functor\DGcorr$ factors through $\KKcat$; in other words, we have the following commutative diagram of functors: 
 \beqn
 \xymatrix{ 
\Csep
\ar[rr]^{\PrK}
\ar[dr]_{\iota}
&& \DGcorr\; .\\
& \KKcat
\ar@{-->}[ur]_{\PrK}}
\eeqn
 \end{thm}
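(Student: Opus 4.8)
The plan is to obtain the factorization from the universal property of Kasparov's bivariant $\K$-theory. Recall \cite{Hig1,CunMeyRos} that the canonical functor $\iota:\Csep\functor\KKcat$ is the universal homotopy invariant, $C^*$-stable and split exact functor from $\Csep$ to an additive category: for every additive category $\cA$ and every functor $F:\Csep\functor\cA$ enjoying these three properties, there is a unique additive functor $\bar F:\KKcat\functor\cA$ with $\bar F\circ\iota=F$. (In fact, by Theorem 3.2.2 of \cite{Hig2} homotopy invariance is a formal consequence of the other two.) Since $\DGcorr=\Hmo_0$ is additive, it therefore suffices to check that $\PrK:\Csep\functor\DGcorr$ satisfies these three properties; the factorization is then automatic.

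All three properties have, in effect, already been established in the preceding lemmas. The $C^*$-stability of $\PrK$ is precisely Lemma \ref{KStable}, and its homotopy invariance is Lemma \ref{Htpy}. For split exactness, Lemma \ref{splitExact} shows that $\Pr:\IAlg\functor\DGcorr$ is split exact, hence so is its restriction along the (non-full) inclusion $\Csep\hookrightarrow\IAlg$. Moreover, as observed in the proof of Lemma \ref{Htpy}, applying $-\prot\cpt$ carries a split exact sequence of $C^*$-algebras to a split exact sequence (the section $s$ is sent to $s\prot\id_\cpt$, and exactness is preserved because $\cpt$ is nuclear and exact). Consequently $\PrK(-)=\Pr(-\prot\cpt)$ is split exact.

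Applying the universal property to $F=\PrK$ now furnishes a unique additive functor $\PrK:\KKcat\functor\DGcorr$ with $\PrK\circ\iota=\PrK$, which is exactly the asserted commutative triangle. Because the analytic and homological substance has already been discharged in Lemmas \ref{splitExact}, \ref{KStable} and \ref{Htpy}, there is no genuine obstacle remaining: the argument is a formal invocation of universality. The only point demanding care is to apply the universal property in its correct form, namely ensuring that $\DGcorr$ is additive and that the three verified properties are precisely those characterizing $\iota:\Csep\functor\KKcat$ among functors to additive categories.
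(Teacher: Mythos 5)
Your proof is correct and follows essentially the same route as the paper: verify that $\PrK$ is $C^*$-stable (Lemma \ref{KStable}) and split exact (Lemma \ref{splitExact}), then invoke Higson's characterization of $\iota:\Csep\functor\KKcat$ as the universal $C^*$-stable and split exact functor into an additive category. Your extra care about why split exactness survives the stabilization $-\prot\cpt$ (a point the paper only records in the proof of Lemma \ref{Htpy}) and your remark that homotopy invariance is automatic are welcome refinements, not deviations.
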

 
 \begin{proof}
 We have already checked that the functor $\PrK$ is $C^*$-stable (Lemma \ref{KStable}) and split exact (Lemma \ref{splitExact}). It remains to apply Higson's characterization of $\iota:\Csep\functor\KKcat$ as the universal $C^*$-stable and split exact functor on $\Csep$, see \cite{Hig1,Hig2}.
 \end{proof}

\noindent
Since the author could not find an analogue of Higson's universal characterization of $\KK$-theory on $\CAlg$ in the literature, the above Theorem is stated only on the full subcategory $\Csep$. In fact, in Section \ref{Corr} the objects of the category $\KKcat$ were by definition only separable $C^*$-algebras.

\begin{rem}
We have the following sequence of implications for  separable $C^*$-algebras

\beqn
\text{Morita--Rieffel equiv.}\Rightarrow \text{$\KKcat$ equiv.}\Rightarrow \text{$\PrK$-equiv. in $\DGcorr$}\Rightarrow\text{isom. $\K^{\textup{top}}$-theories}. 
\eeqn In order to make the constructions a bit more sensitive to the underlying topologies of the $C^*$-algebras, one might consider replacing the category of $\CC$-linear spaces by that of locally convex topological vector spaces over $\CC$, which admits the structure of a {\it quasiabelian} category \cite{Schn,Pros}. 
 \end{rem} 

\noindent
It is useful to know that the functor $\PrK:\KKcat\functor\DGcorr$ exists by abstract reasoning. However, in order to make the situation a bit more transparent we make use of a rather algebraic formulation of $\KK$-theory \cite{CunKK}. For any  $C^*$-algebra $A$ let $A\ast A$ denote the free product (which is the coproduct in $\Csep$) of two copies of $A$ and let $qA$ be the kernel of the fold map $A\ast A\map A$. It was shown in {\it ibid.} that $A$ (resp. $B$) is isomorphic to $qA$ (resp. $B_\cpt$) in $\KKcat$ and $\KK_0(A,B)\cong[qA,B_\cpt]$, i.e., homotopy classes of $*$-homomorphisms $qA\map B_\cpt$. Roughly, the algebra $qA$ is expected to play the role of a cofibrant replacement of $A$ and $B_\cpt$ that of a fibrant replacement of $B$ with respect to some model structure with $\KK_*$-equivalences as weak equivalences. This goal has been accomplished in a larger category \cite{JoaJoh}, where all objects are fibrant and a minor modification of $qA$ acts as a cofibrant replacement \cite{JoaJoh}. One can view Kasparov's product simply as a composition of $*$-homomorphisms via the identification $\KK_0(A,B)\cong[q(A_\cpt)\prot\cpt,q(B_\cpt)\prot\cpt]$. In order to define the abelian group structure one proceeds roughly as follows: for any $\phi, \psi\in [qA,B_\cpt]$ one defines $\phi\oplus\psi : qA \map \mathbb{M}_2(B_\cpt)$ as 
 $\left( \begin{smallmatrix}
 \phi & 0 \\
 0    &  \psi
 \end{smallmatrix}
 \right)$. Then one argues that $\mathbb{M}_2(B_\cpt)$ is isomorphic to $B_\cpt$ in $\KKcat$ and fixing such an isomorphism $\theta$ one sets $\phi + \psi = \theta(\phi\oplus\psi)$. We can exploit this fact by concluding that if $A$ is isomorphic to $B$ in $\KKcat$ then $\PrK(qA)$ is isomorphic to $\PrK(B_\cpt)$ in $\DGcorr$ by an explicit DG functor induced by the homotopy class of a $*$-homomorphism $qA\map B_\cpt$ that is invertible in $\KKcat$. That the functor $\PrK:\KKcat\functor\DGcorr$ is additive follows from the universal property. It can be made more explicit. We simply use the fact that any direct sum diagram in $\KKcat$ can be expressed as a split exact diagram involving only $*$-homomorphisms applying $q(-)$ and ${-}\prot\cpt$ several times, both of which produce isomorphic objects in $\KKcat$, and  then apply the split exactness of $\PrK$. In the classification programme of $C^*$-algebras the Kirchberg--Phillips Theorem states that two stable {\it Kirchberg algebras} $A, B$ are isomorphic in $\KKcat$ if and only if they are $*$-isomorphic \cite{KirPhi,NCPClassification}. Therefore, for such algebras $A,B$ the isomorphisms in $\DGcorr$ between $\PrK(A),\PrK(B)$ are more tractable. The automorphism groups of commutative $C^*$-algebras in $\KKcat$ are computable from their $\K^{\textup{top}}$-theory groups using the Universal Coefficient Theorem \cite{RosSch}. 

 \begin{prop}
 For any $A\in\Csep$, the group $\Aut_{\KKcat}(A)$ acts on $\PrK(A)$ in $\DGcorr$ and subsequently on $\bK^{\textup{top}}(A_\cpt)\langle 0\rangle$ in $\HoSpt$ via the functor $\bK(-)$. 
\end{prop}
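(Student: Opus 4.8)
The plan is to deduce the statement formally from Theorem \ref{MainThm} and Theorem \ref{KSpec}, the substantive work having already been carried out there. First I would recall that $\Aut_{\KKcat}(A)$ is, by definition, the group of invertible elements of $\KK_0(A,A)$ under the Kasparov product, i.e.\ the automorphism group of $A$ as an object of $\KKcat$. By Theorem \ref{MainThm} there is a functor $\PrK:\KKcat\functor\DGcorr$, and any functor preserves identities and composition and hence carries isomorphisms to isomorphisms. Therefore it restricts to a group homomorphism
\[
\Aut_{\KKcat}(A)\functor\Aut_{\DGcorr}(\PrK(A)),
\]
which is exactly the asserted action of $\Aut_{\KKcat}(A)$ on $\PrK(A)$ in $\DGcorr$.

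For the second assertion I would compose with the $\K$-theory functor $\bK_{dg}$. As recalled after Definition \ref{NKQ}, the functor $\bK_{dg}:\DGcat\functor\HoSpt$ factors through $\DGcorr$, so there is an induced functor $\bK_{dg}:\DGcorr\functor\HoSpt$. The same functoriality argument then produces a group homomorphism $\Aut_{\DGcorr}(\PrK(A))\functor\Aut_{\HoSpt}(\bK_{dg}(\PrK(A)))$, and composing it with the homomorphism above yields an action of $\Aut_{\KKcat}(A)$ on $\bK_{dg}(\PrK(A))$ in $\HoSpt$. By Theorem \ref{KSpec} one has $\bK_{dg}(\PrK(A))=\kQ(A_\cpt)\cong\bK^{\textup{top}}(A_\cpt)\langle 0\rangle$ in $\HoSpt$, and transporting the action along this isomorphism (by conjugation, which canonically identifies the two automorphism groups) delivers the desired action on $\bK^{\textup{top}}(A_\cpt)\langle 0\rangle$.

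I expect no genuine obstacle: once the functors $\PrK:\KKcat\functor\DGcorr$ (Theorem \ref{MainThm}) and $\bK_{dg}:\DGcorr\functor\HoSpt$ are available, the existence of both actions is pure functoriality, and the identification of the target spectrum is precisely Theorem \ref{KSpec}. The only points worth spelling out are the interpretation of an ``action'' as a group homomorphism into the relevant automorphism group and the elementary observation that an isomorphism in a category canonically identifies the automorphism groups of its source and target, so that the action indeed survives the passage to $\bK^{\textup{top}}(A_\cpt)\langle 0\rangle$.
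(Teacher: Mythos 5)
Your proposal is correct and follows precisely the route the paper intends: the paper states this proposition without a separate proof, treating it as an immediate formal consequence of Theorem \ref{MainThm} (the factorization $\PrK:\KKcat\functor\DGcorr$), the fact noted after Definition \ref{NKQ} that $\bK_{dg}$ factors through $\DGcorr$, and the identification $\bK_{dg}(\PrK(A))=\kQ(A_\cpt)\cong\bK^{\textup{top}}(A_\cpt)\langle 0\rangle$ of Theorem \ref{KSpec}. Your explicit formulation of the action as a group homomorphism into $\Aut_{\DGcorr}(\PrK(A))$, followed by composition with the induced $\K$-theory functor and transport of the action along the isomorphism of Theorem \ref{KSpec} in $\HoSpt$, is exactly the purely functorial argument the paper relies on.
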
 

\noindent
It would be interesting if, under some assumptions, one could lift the action on $\bK^{\textup{top}}(A_\cpt)\langle 0\rangle$ to the actual category of spectra (and not its homotopy category $\HoSpt$).

 \begin{ex} \label{TorusEx}
Let $A= \C(E)$ be the $C^*$-algebra of continuous functions on a complex elliptic curve $E$. Topologically $E$ is isomorphic to a $2$-torus $\mathbb{T}^2$. It is known that $\K^\textup{top}_0(A)$ is isomorphic to $\ZZ^2$ and so is $\K_1^\textup{top}(A)$. Using the fact that $A$ belongs to the Universal Coefficient Theorem class, one computes $\KK_0(A,A)\simeq \textup{M}(2,\ZZ)\times\textup{M}(2,\ZZ)$. 
 
Let $\textup{D}^b(E)$ be the bounded derived category of coherent sheaves on $E$. The automorphism group $\Aut(\textup{D}^b(E))$ can be described explicitly (see, e.g., Remark 5.13. (iv) \cite{BurKre}). There is a surjective map $\Aut(\textup{D}^b(E))\map\SL(2,\ZZ)$ with a non-canonical splitting defined by sending the generators of $\SL(2,\ZZ)$ to some specific Seidel--Thomas twist functors \cite{SeiTho}. It is clear that the group $\Aut(\textup{D}^b(E))$ is bigger than the units in the ring $\KK_0(A,A)$, since it contains $\textup{Pic}^0(E)$ as a subgroup. The $\SL(2,\ZZ)$ part of $\Aut(\textup{D}^b(E))$ can be described by the Seidel--Thomas twist functors, which can also be seen as Fourier--Mukai transforms and it seems that $\KKcat$-equivalences give rise to analogous actions on $\PrK(A)$. 
\end{ex}

\section{Towards noncommutative topological field theories} \label{TFT}

Following Atiyah--Segal one roughly defines an $n$-dimensional topological field theory to be a symmetric monoidal functor from the bordism category of oriented $(n-1)$-manifolds to some symmetric monoidal category with `sufficiently linear' objects \cite{AtiTQFT}. In string theory there are various dualities, e.g., $\TT$-duality, $\mathbb{S}$-duality, etc., which can also be studied from the point of view of topological field theories. Quite often the construction of dual theories require us to go beyond the realm of commutative geometry (see, e.g., \cite{MatRos3}). Therefore, it is desirable to develop a notion of a noncommutative topological field theory which can accept noncommutative spaces as input. Topological $\K$-theory carries important information about the field theory; for instance, it classifies the D-brane charges of the space \cite{Wit}. The bordisms signify the time evolution of spaces and hence they are expected to induce natural morphisms between topological $\K$-theories according to the way the $D$-brane charges change as one space evolves to another. Since Kasparov's bivariant $\K$-theory is supposed to be the space of morphisms between topological $\K$-theories, we expect every bordism to produce an element in it. More precisely, one would like to have a category of noncommutative bordisms admitting a natural functor to the category $\KKcat$. However, a topological field theory is not expected to factor through $\KKcat$, unless the target category is suitably localized. The construction of the noncommutative bordism category will be taken up later. In this article we produced a functorial construction from $\KKcat$ to a localized and linearized version of the category of essentially small differential graded (DG) categories. It will be useful to detect changes in topological $\K$-theory via bordisms, when noncommutative topological field theories are eventually constructed. One can also construct cyclic homology of DG categories and related Chern character type maps.

\noindent
We give two instances below, where such a formalism can be helpful.

\begin{ex}(Topological $\TT$-dualities) \label{T-duality}

\noindent
A {\it sigma model} roughly studies maps $\Sigma\map X$, where $\Sigma$ is called the {\it worldsheet} (Riemann surface) and $X$ the {\it target spacetime} (typically a $10$-dimensional manifold in supersymmetric string theories). Mirror symmetry relates the sigma models of type $\mathtt{IIA}$ and $\mathtt{IIB}$ string theories with dual Calabi--Yau target spacetimes. In open string theories, i.e., when $\Sigma$ has boundaries, the boundaries are constrained to live in some special submanifolds of the spacetime $X$. Such a submanifold also comes equipped with a special {\it Chan-Paton} vector bundle and together they define a topological $\K$-theory class of $X$ via the Gysin map. The D-brane charges are classified by the topological twisted $\K$-theory classes, in the presence of $\H$-fluxes. The homological mirror symmetry conjecture of Kontsevich predicts an equivalence of triangulated categories of $\mathtt{IIA}$-branes (Fukaya category) on a Calabi--Yau target manifold $X$ and $\mathtt{IIB}$-branes (derived category of coherent sheaves) on its dual $\hat{X}$. This equivalence would induce an isomorphism between their Grothendieck groups and it was argued that the Grothedieck group of the category of $A$-branes on $X$, at least when the dimension of $X$ is not divisible by $4$, should be isomorphic to $\K^1_{\textup{top}}(X)$ \cite{Horava}. Strominger--Yau--Zaslow argued that sometimes when $X$ and $\hat{X}$ are mirror dual Calabi--Yau $3$-folds one should be able to find a generically $\TT^3$-fibration over a common base $Z$

\begin{equation} \label{fibration}
\xymatrix{
X
\ar@{-->}[rr]^{\text{$\TT$-duality}}
\ar[dr]
&& \hat{X} \, ,
\ar[dl] \\
& Z
}
\end{equation}

such that mirror symmetry is obtained by applying $\TT$-duality fibrewise \cite{SYZ}. Since $\TT$-duality is applied an odd number of times it interchanges types ($\mathtt{IIA}\leftrightarrow \mathtt{IIB}$). Sometimes using Poincar{\' e} duality type arguments it is possible to identify topological $\K$-theory with $\K$-homology. Kasparov's $\KK$-theory naturally subsumes $\K$-theory and $\K$-homology. It was shown in \cite{BMRS2} that the effect of certain odd number of topological $\TT$-duality transformations (possibly including more parameters like $\H$-fluxes) could be interpreted as arising from $\KK_1$-classes between suitably defined continuous trace stable $C^*$-algebras, which capture the geometry of the above diagram \ref{fibration}. As argued before, whilst the application of an odd number of $\TT$-duality transformations interchanges types ($\mathtt{IIA}\leftrightarrow\mathtt{IIB}$), that of an even number preserves types and corresponds to a $\KK_0$-class. Therefore, an invertible topological $C^*$-correspondence or an invertible $\KK_0$-class is an abstract generalization of an even number of $\TT$-duality transformations (or a $\TT^{2n}$-duality), viewed as an equivalence of $\mathtt{IIB}$-branes (or $\mathtt{IIA}$-branes) and inducing an isomorphism at the level of topological $\K$-theories. The category $\KKcat$ is triangulated with the suspension functor $\Sigma(-)=\C_0((0,1))\prot -)$ and one can also use the identification $\KK_1(A,A')\cong \KK_0(A,\Sigma A')$ to reduce a $\TT^{2n+1}$-duality to a $\TT^{2n}$-duality.
\end{ex}

\begin{ex}($\E$-theory and strong deformations up to homotopy) \label{deformation}

\noindent
The Connes--Higson $\E$-theory \cite{ConHig} is the universal $C^*$-stable, half exact and homotopy invariant functor. Recall that a functor $F$ is called {\it half exact} if it sends an exact sequence of $\C^*$-algebras $0\map A\map B\map C\map 0$ to an exact sequence $F(A)\map F(B)\map F(C)$ (only exact at $F(B)$). It is known that any half exact and homotopy invariant functor is split-exact. Therefore, there is a canonical induced functor $\KKcat\functor\mathtt{E}$, where the category $\mathtt{E}$ consists of $C^*$-algebras with bivariant $\E_0$-groups as morphisms. This functor is fully faithful when restricted to nuclear $C^*$-algebras; in fact, by the Choi--Effros lifting Theorem $\KK_*(A,B)\cong\E_*(A,B)$ whenever $A$ is nuclear. Thus, restricted to nuclear $C^*$-algebras there are maps $\E_0(A,B)\cong\KK_0(A,B)\map \Hom_{\DGcorr}(\PrK(A),\PrK(B))$. Let $B_\infty:= \C_b([1,\infty),B)/\C_0([1,\infty),B)$, where $\C_b$ denotes bounded continuous functions, be the {\it asymptotic algebra} of $B$. An {\it asymptotic morphism} between $C^*$-algebras $A$ and $B$ is a $*$-homomorphisms $\phi:A\map B_\infty$.
 
A {\it strong deformation} of $C^*$-algebras from $A$ to $B$ is a continuous field $A(t)$ of $C^*$-algebras over $[0,1]$ whose fibre at $0$, $A(0)\cong A$ and whose restriction to $(0,1]$ is the constant field with fibre $A(t)\cong B$ for all $t\in (0,1]$. 

Given any such strong deformation of $C^*$-algebras and an $a\in A$ one can choose a section $\alpha_a(t)$ of the continuous field such that $\alpha_a(0) =a$. Suppose one has chosen such a section $\alpha_a(t)$ for every $a\in A$. Then one associates an asymptotic morphism by setting $(\phi(a))(t) = \alpha_a(1/t), \; t\in[1,\infty)$. Recall that $\Sigma A:=\C_0((0,1),A)$ is the suspension of $A$. The Connes--Higson picture of $\E$-theory says that $\E_0(A,B) \cong [[\Sigma A_\cpt,\Sigma B_\cpt]]$, where $[[?,-]]$ denotes homotopy classes of asymptotic morphisms between $?$ and $-$. Let $\phi$ be an asymptotic morphism defined by a strong deformation from $A$ to $B$. Then we call the class of $\phi$ in $\E_0(A,B)$ a {\it strong deformation up to homotopy} from $A$ to $B$. If $A$ is nuclear, e.g., if $A$ is commutative, whenever the class of $\phi$ is invertible in $\E_0(A,B)$, we deduce that $\PrK(A)$ and $\PrK(B)$ are isomorphic in $\DGcorr$.
\end{ex}

%----------------------------------------bibliography----------------------------------------------------

\bibliographystyle{abbrv}
\bibliography{/home/ibatu/Professional/math/MasterBib/bibliography}

\medskip

\end{document}